\newtheoremstyle{named}{}{}{\itshape}{}{\bfseries}{.}{.5em}{\thmnote{#3's }#1}
\theoremstyle{named}
\theoremstyle{definition}
\newtheorem{definition}{Definition}
\newlength{\dhatheight}
\newtheorem{lemma}{Lemma}
\newtheorem{Theorem}{Theorem}
\begin{document}



\title{ Precise Indoor Positioning:  Deployment of Reference Nodes to Guarantee LoS Condition
}
\title{Deployment of Reference Nodes to Guarantee a LoS Condition for Accurate Indoor Positioning}

    \author{\IEEEauthorblockN{Mohsen Abedi, Alexis A. Dowhuszko,~\IEEEmembership{Senior Member,~IEEE}, and Risto Wichman \\}
    \IEEEauthorblockA{Department of Information and Communications Engineering, 
    Aalto University, 02150 Espoo, Finland\\
    E-mails: \{mohsen.abedi, alexis.dowhuszko,  risto.wichman\}@aalto.fi} 
    }
\maketitle

\begin{abstract}
Accurate and precise positioning is required to guarantee the massive adoption of a wide range of 5G indoor applications, such as logistics and smart manufacturing. 
Native support for New Radio (NR) positioning services was included in 3GPP \mbox{Rel-16}, where angles-of-arrival/departure and time-(difference-)of-arrival measurements were specified in uplink and downlink.
However, all these positioning techniques assume Line-of-Sight~(LoS) propagation, 
suffering from systematic bias errors when such a condition cannot be guaranteed. To improve the accuracy and precision of indoor positioning systems that rely on proximity, triangulation, or trilateration principles, 
this paper considers the deployment of reference nodes to ensure LoS to one, two, or three nodes, respectively. For this purpose, the indoor service area is modeled with a graph whose nodes represent the polygons that partition the floor plan.  Then, the graph is partitioned into the minimum number of cliques, which specify the minimum number of reference nodes and their placement to guarantee a LoS condition regardless the user terminal position. The desired accuracy for positioning is guaranteed by setting a  minimum distance and a minimum angle between the reference nodes as two configuration parameters of the derived algorithms. 




\thispagestyle{empty}
\pagestyle{empty}
\end{abstract}

\vspace{0mm}

\begin{keywords}
Indoor positioning system,   
triangulation, angle-of-arrival, trilateration, time-difference-of-arrival, Line-of-Sight, 
millimeter-wave, THz, visible light communications. 
\end{keywords}

\vspace{-2mm}
\section{Introduction} 
\label{sec:1}
\vspace{-0.5mm}

Global Navigation Satellite Systems~(GNSS), such as GPS and/or Galileo, have been widely used so far to provide positioning services outdoors with the assistance of terrestrial mobile networks~\cite{pera2018,raza2018}. However, these satellite-based positioning systems cannot offer its services indoors, especially in relevant interior environments that have been identified in 5G standardization, such as offices, shops, warehouses, and hospitals~\cite{erik2021}. The key challenge of 5G-based positioning inside buildings is not necessarily the lack of enough communications infrastructure that could serve as reference nodes, which can be tackle in part with additional capital investments. 
The key challenge to enable 5G-based positioning in relevant use cases, such as the ones that exists in  Industry 4.0,  
are the hard indoor propagation characteristics in these situations~\cite{Tao2021}, 
with a simultaneous demand of improved positioning accuracy in the sub-decimeter range for smart industry applications~\cite{sail2021}.

With the evolution of 3GPP standards from 4G~(LTE) to 5G~(NR), network-based positioning has been enhanced in different
ways. Cell ID~(CID) was the baseline single-cell method defined in 3GPP Rel-8, in which the position of a User Equipment~(UE) was determined
based on the information that the serving cell had about the paging and tracking area update~\cite{raza2018}. Later on, 3GPP Rel-9 considered the support of Enhanced CID~(E-CID) with the addition of Received Signal Strength~(RSS), timing advance, and
Angle-of-Arrival~(AoA) measurements~\cite{pera2018}. After few incremental changes in the LTE-Advanced (LTE-A) releases, 3GPP Rel-16 introduced notable enhancements for multi-cell positioning methods for 5G (NR), targeting a less than $3$-meter accuracy indoors. For this, a new Positioning Reference Signal~(PRS) was defined to be used by various 5G positioning techniques, such as Round-Trip Time~(RTT), AoA, Angle-of-Departure~(AoD), and Time-Difference-of-Arrival~(TDOA)~\cite{keat2019}. Nevertheless, the positioning accuracy of these methods is notably affected by the existence of a Line-of-Sight~(LoS) condition~\cite{ren2021}. This is because Non-LoS~(NLoS) signals travel longer distances, introducing systematic errors that affect the UE position estimation.
Much effort was put to identify the existence of LoS and NLoS conditions in the received PRS, as well as in identifying ways to compensate the loss of accuracy in the latter case~\cite{xiao15,choi18}.

The performance of any mobile network-based positioning method depends on the strategy used to deploy the reference nodes that carry out the transmission and/or reception of the reference signals. Positioning accuracy is strongly influenced by the number and distribution of reference nodes~\cite{soga2011}.
The authors in \cite{posl2021} examine indoor positioning accuracy in a rectangular indoor area using grid and edge deployment strategies. Moreover, \cite{ryden2015baseline} discusses a regular deployment within a hall-shaped indoor environment.
In 5G networks, the positioning reference nodes are represented by Transmission and Reception Points (TRPs) associated with the gNBs (or 5G base stations), which transmit PRS in downlink and receive Sounding Reference Signals (SRS) in uplink. Then, different time- and angle-related parameters of these reference signals are reported to a 5G location server, which estimates the position of the UE~\cite{erik2021}. Unfortunately, 
when deploying these 3GPP nodes indoors, the presence of walls and obstacles may block the LoS components between TRP and UE~\cite{raza2018}. In the case, both time and angle measurements become biased, and the positioning uncertainty of indoor UEs is notably increased.  

From a more theoretical perspective, positioning techniques are broadly classified according to the fundamental principle that is used to estimate the UE position, namely: proximity, triangulation, and multilateration~\cite{pera2018}. The 3GPP (E-)CID method uses the proximity principle and associates the position of UE to the known one of the nearest reference node~\cite{soga2011}.  In contrast, the 3GPP positioning method based on AoA (AoD) measurements use triangulation, defining  the UE position as the intersection of the directions from where the reference signal is being received (transmitted)~\cite{brooks2005review}. Finally, 3GPP RTT, ToA, and TDoA methods are based on multilateration, assigning the position of the UE to the intersection of at least three geometrical figures; for RTT and ToA (TDoA), these are circles (hyperbolas) with center (focus) in each reference node~\cite{pera2018}. Thus, the deployment of positioning reference nodes to guarantee the LoS condition to one (proximity), two (triangulation) and three (multilateration) reference nodes is very importance to minimize  systematic errors and improve the accuracy of and Indoor Positioning System~(IPS). 

The use of 5G Millimeter-Wave (mmWave) frequency bands, with much wider bandwidths ($400$\,MHz) when compared to below $6$\,GHz bands ($100$\,MHz or less), provides a much better time resolution when computing propagation time (ToA and RTT) and time differences (TDoA) over-the-air~\cite{holm2020}. In addition, the use of higher frequency bands facilitates the packing of large-sized antenna arrays, enabling a finer resolution in the angular domain~\cite{shah2018}. 
The communication range of mmWave signals is expected to be as high as few hundred meters~\cite{rapp2013}, which is longer that the maximum distance between any pair of points in the indoor environments considered in 3GPP~\cite{38.901,Tao2021}.
 This makes the detailed planning of mmWave reference nodes to ensure LoS visibility in all points of the service area a range-unconstrained problem that resembles the well-known \emph{Art Gallery} prooblem~\cite{de1997computational}. 
 However, this is not the case when new 6G spectrum beyond mmWave is being considered~\cite{jian2021} since the communication range at these new frequency bands is notably reduced, creating a range-constrained version of the Art-Gallery problem~\cite{abedi2021visible}.

Although Terahertz (THz) band communications ($0.1$--$10$ THz) has the potential to alleviate the spectrum scarcity when moving from 5G to 6G~\cite{dowh2017}, the transmission distance is expected to be much shorter due to higher path loss attenuation~\cite{mold2014} and the difficulty to generate strong transmission power beyond $100$\,GHz~\cite{arms2012}. A similar range-constrained situation occurs when RF is replaced by optical wireless, such as Visible Light Communications~(VLC). However, the range of a VLC cell is not limited by the optical power of the transmitter, as Light-Emitting Diodes~(LEDs) are very efficient sources of optical power~\cite{dowh2022}, but rather by the directivity LED beams and the narrow Field-of-View~(FoV) of the Photodetectors~\cite{dowh2020}. Some simple examples of VLC-based indoor positioning applications include location-aware services, such as headsets for visitors in museums, 
asset tracking in hospitals (e.g., wheelchairs), airports (e.g., trolleys), and warehouses (e.g., consignments)~\cite{armstrong2013visible}. It is also possible to use VLC for indoor positioning in smart industrial environments, where the electric engines and machines of production lines create notable electromagnetic interference that can seriously affect the quality of communications over RF bands~\cite{sisi2018}.

Similar to the line-of-vision of the guards in the Art Gallery problem, mmWave, THz, and VLC systems are also blocked by walls and large-sized obstacles. However, the effective visibility range of a wireless system is different, as it depends on the link budget that is associated with
different frequency bands \cite{rapp2013,bottigliero2021low, mold2014}. 
For this purpose, we model an arbitrary floor plan using a graph that considers the range constraint of the air interface and the effect of physical obstacles in the LoS propagation. Then, the partitioning of this graph into the minimum number of cliques can be used to determine the minimum number of reference nodes and their placement to guarantee a LoS condition to all points of the service area, enabling an accurate positioning based on proximity.  
By adjusting the separation distance and angle between reference nodes for improved positioning precision, a modified version of this graph is partitioned into the minimum number of cliques. As a result of this graph partitioning, we can discover the minimum number of
reference nodes and their placement to ensure LoS conditions throughout the floor plan when using the other positioning principles, which are triangulation (two nodes) and multilateration (three or more nodes).

The rest of this paper is organized as follows: Section~\ref{sec:2} presents the system model and discusses the guidelines for deploying reference nodes, aiming at improving the accuracy of the three positioning principles. Section~\ref{sec:3} derives the algorithms to carry out the network element deployment when the LoS condition needs to be guaranteed to one (proximity), two (triangulation), and three (multilateration) reference nodes. Section~\ref{sec:4} carries out a detailed simulation analysis of the
performance of these deployment algorithms, showing the effect of the different configuration parameters on the detailed network planning. Finally, conclusions are drawn in Section~\ref{sec:5}.

\section{System Model and Problem Statement}
\label{sec:2}

\begin{figure}[!t]
\advance\leftskip -0.2cm
\hspace{0cm}\includegraphics[width=8.2cm]{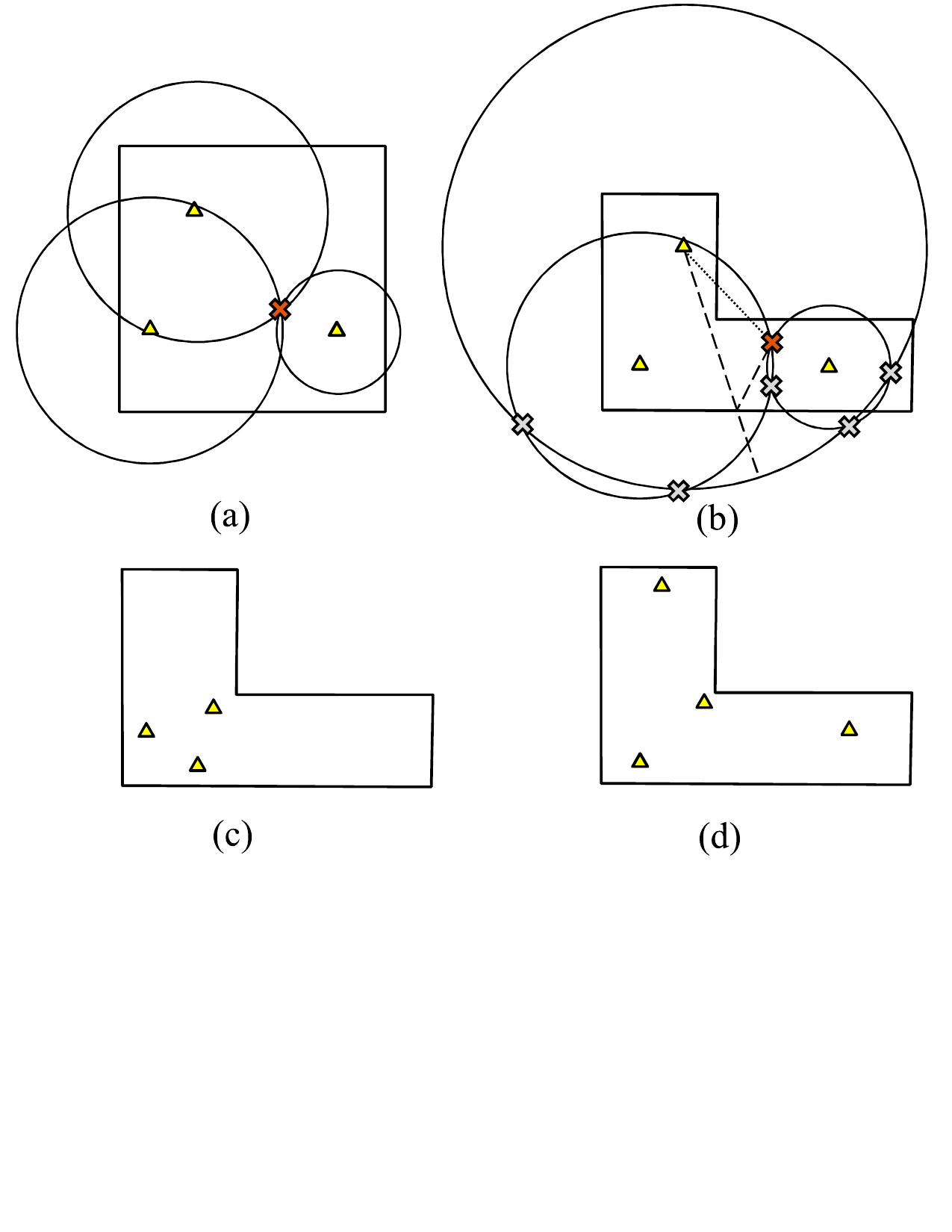}
    \vspace{-3.3cm}\caption{Effects of reference nodes placement (yellow triangles) over the accuracy of the positioning of the user equipment (red cross) based on the trilateration principle. a) Sample accurate (unbiased) positioning estimation in an square-shaped layout. b) Sample inaccurate (biased) positioning estimation (grey crosses) due to the blockage of the LoS link towards a reference node in an L-shaped layout. c-d) Sample reference node deployments that guarantee a LoS condition to all points in the service area of an L-shape layout. }\label{fig:Accuracy-precision}
\vspace{-2mm}
\end{figure}

The deployment of Positioning Reference Nodes (PRNs) is critical for enabling the precise and accurate positioning of a UE regardless its indoor location. We now examine how PRN placement affects indoor positioning accuracy and precision.

\subsection{Effects of PRN placement on the accuracy of IPS}
A PRN deployment that guarantees  LoS condition between a UE and a sufficient number of reference nodes, such as one PRN (proximity principle), two PRNs (triangulation principle), and three PRNs (trilateration principle), ensures accurate (unbiased) positioning.
For instance, in an IPS based on trilateration,
accurate positioning can only be achieved by taking at least three unbiased distance measurements, which will require LoS links between the UE and the three PRNs. 

As indicated in Fig. \ref{fig:Accuracy-precision}a),  a square-shaped layout with three PRNs (yellow triangle marks) ensures LoS conditions between a randomly placed UE (red cross mark) and the PRNs in the room, assuming an unlimited range of the wireless link used for ranging. 
Thus, this given PRN placement guarantees accurate positioning via trilateration for a UE regardless of its location.
 Fig. \ref{fig:Accuracy-precision}b) illustrates an L-shaped layout with some LoS links blocked (dotted line segment), resulting in overestimated (biased) distance measurements and thus multiple possible estimations of the location of the UE (gray cross marks).
 When PRNs are placed as in Fig.~\ref{fig:Accuracy-precision}c), the three LoS links that are needed for accurate positioning based on trilateration are ensured between the three PRNs and randomly placed UE, regardless its position in the layout. However, PRNs are relatively close and this may result in positioning uncertainty.
Finally, a four-PRN deployment, illustrated in Fig. \ref{fig:Accuracy-precision}d), ensures a LoS condition between at least three PRNs and a UE placed anywhere within the limits of the same L-shaped layout, contributing to accurate positioning in the whole service area as the trilateration nodes are better distributed in space.

In practice, PRNs are typically limited in service range, and indoor environments often have irregular shapes with walls that block the propagation of signals. Therefore, a systematic approach is required for an efficient deployment of PRNs to ensure LoS condition to a variable number of nodes throughout the entire layout.


\subsection{Effect of PRN deployment on the precision of IPS}\label{subsec:precision}

\begin{figure}[!t]
\centering
\advance\leftskip-0.1cm
\hspace{0.2cm}\includegraphics[width=9cm]{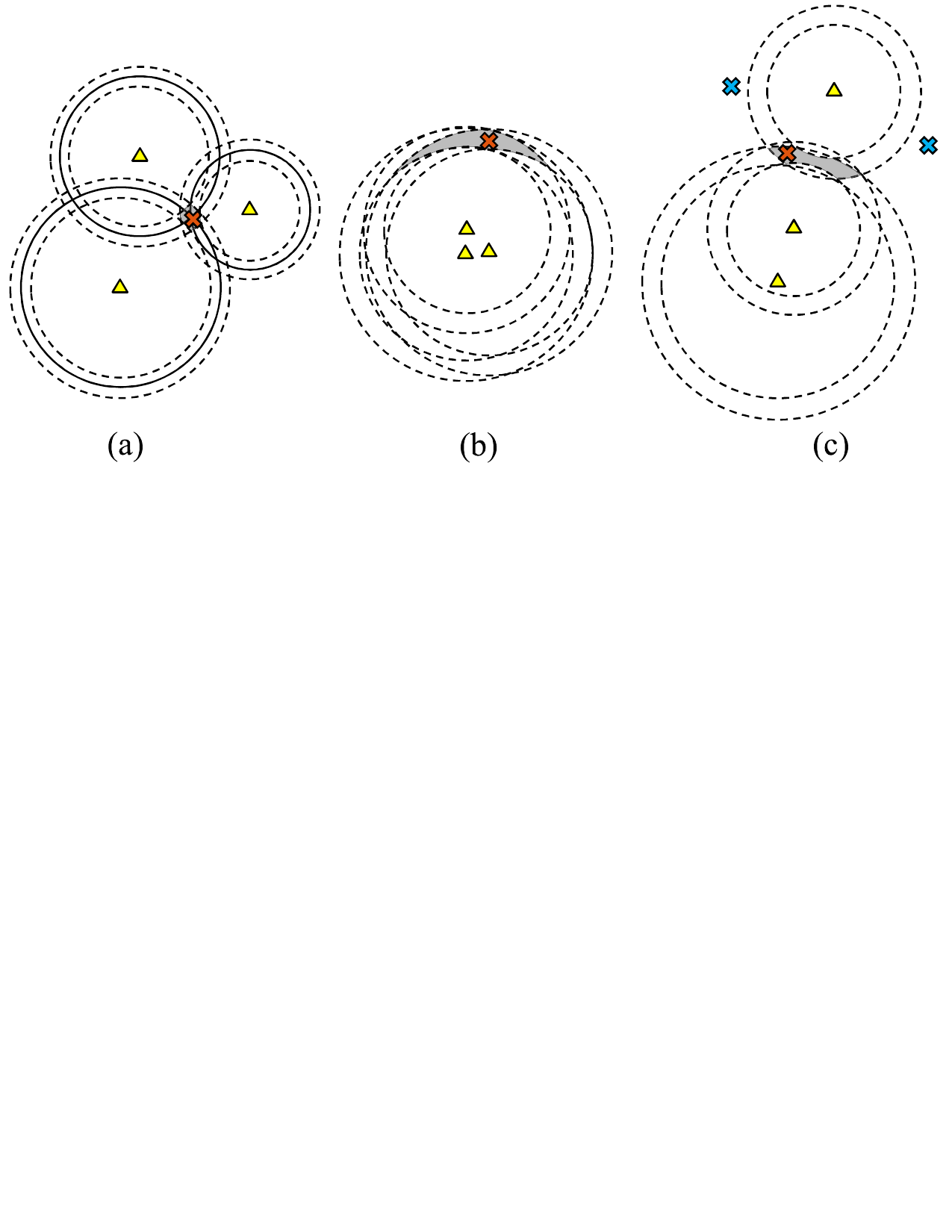}
    \vspace{-7.2cm}\caption{
     Effects of PRN placement over  positioning precision based on trilateration principle; a) An error-free distance measurement (solid circles) and a small positioning uncertainty due to noisy measurements (dashed circles), b) a tight PRN placement, c)  a co-linear PRN placement.  }\label{fig:Trilateration-Accuracy}
\vspace{-0.2cm}
\end{figure}

The uncertainty (error variance) around the estimated position of the UE as a result of erroneous measurements is an indicator of positioning precision. 
For instance in trilateration, 
 establishing LoS condition between a UE and three PRNs or more does not necessarily guarantee very precise positioning, although estimation will be unbiased.
 Figure ~\ref{fig:Trilateration-Accuracy}  visualizes how the placement of three PRNs affects the positioning uncertainty.
Fig.  \ref{fig:Trilateration-Accuracy}a) illustrates the precise positioning (solid circles) as well as errors in distance measurement (dashed circles) resulting in a small uncertainty in positioning (gray area).
 The positioning uncertainty may, however, be high if PRNs are closely located, as shown in Fig. \ref{fig:Trilateration-Accuracy}b). 
 Fig. \ref{fig:Trilateration-Accuracy}c) shows a vast positioning uncertainty due to the placement of co-linear PRNs.
In addition, it is still unclear whether this placement will resolve the positioning ambiguity since the location of both UEs (gray cross marks) are mirrored with respect to the line that connects the PRNs.
Therefore, positioning uncertainty is smallest as long as one of three visibility angles between UE and PRNs is close to $90$-deg. 
 To establish a relation between visibility angle of UEs and PRN deployments, we define a \emph{triplet} concerning two parameters, i.e., Minimum Separation Distance (MSD) and Minimum Separation Angle (MSA), denoted by $d_s$ and $\theta_s$, respectively.

\begin{figure}[!t]
\centering
\hspace{0.2cm}\includegraphics[width=8.2cm]{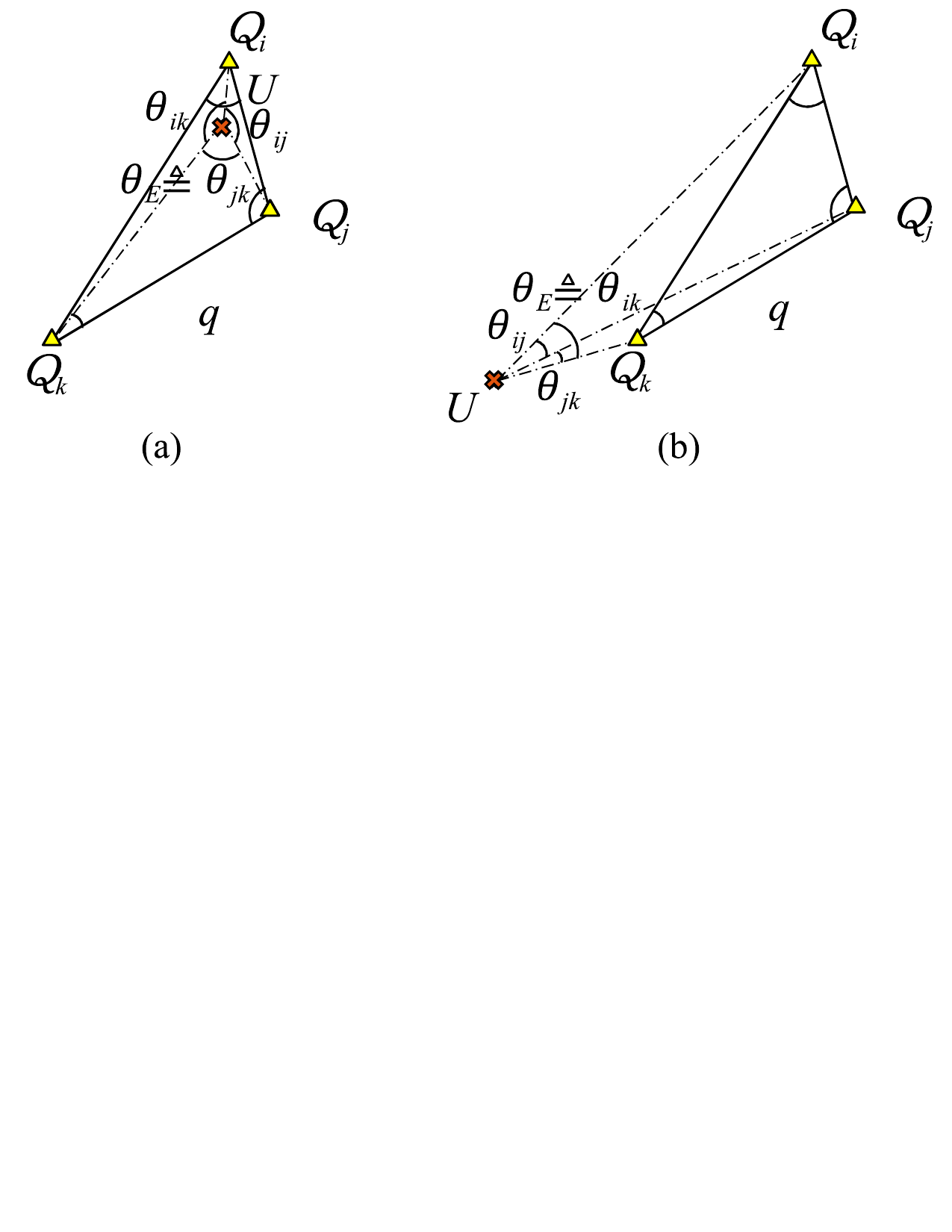}
    \vspace{-6.7cm}\caption{ The three visibility angles and identifying the EVA when the UE falls a) inside $q$ and b) outside $q$.  }\label{fig:angle_define}
\vspace{-0.5cm}
\end{figure}

\begin{definition}\label{well-separated.def} $\bf{Triplet~of~PRNs}$ is composed of three PRNs whose location points form the vertices of triangle $q$ such that
\begin{itemize}
    \item  the three sides of $q$ are greater than 
$d_s$ and,
    \item  the three inner angles of $q$ are greater than $\theta_s$,
\end{itemize}
by selecting the MSD and MSA values from the feasible intervals $0 \leq d_s \leq 2r$ and $0^\circ \leq \theta_s \leq  60^\circ$, respectively, where $r$ represent the maximum range.
\end{definition}
   Figure \ref{fig:angle_define} clarifies Definition \ref{well-separated.def}, where the vertices of $q$ are formed by three PRNs placed at the points $Q_i$, $Q_j$, and $Q_k$. 
 The set of PRNs is a triplet with $d_s$ and $\theta_s$, if $d_s\leq \|Q_iQ_j\|,\|Q_iQ_k\|,\|Q_jQ_k\|$ and $\theta_s\leq \hat{Q}_i,\hat{Q}_j,\hat{Q}_k$. 
Consider a UE at point $U$ with LoS conditions to the triplet.
  We define non-reflex visibility angles $\theta_{ij}\coloneqq  \widehat{Q_iUQ_j}$, $\theta_{ik}\coloneqq  \widehat{Q_iUQ_k}$, and $\theta_{jk}\coloneqq  \widehat{Q_jUQ_k}$,
 among which the Effective Visibility Angle (EVA), denoted by $\theta_E$, refers to the closest one to $90$ degrees.  We characterize how EVA is confined as a function of MSD and MSA parameters.
 \begin{Theorem}\label{Theorem:effective-angle}
      Assume a triplet of PRNs, forming a triangle $q$, have LoS conditions to a UE at $U$. Then, the EVA of the UE is bounded around the right angle by
\begin{equation} \label{eq1}
\begin{split}
&|90^\circ-\theta_E|\leq90^\circ- \theta_s,\\
\end{split}
\end{equation}
if $U$ lies inside $q$, and
\begin{equation} \label{eq:effective_LoS_angle_outside}
\begin{split}
&|90^\circ-\theta_E|\leq90^\circ- 2\times\arctan\big({\dfrac{d_s}{2r}}\tan{(\dfrac{\theta_s}{2}})\big),\\
\end{split}
\end{equation}
if $U$ lies outside $q$.
 \end{Theorem}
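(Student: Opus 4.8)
The plan is to split into the two cases of the statement, since the angular relationship among $\theta_{ij},\theta_{jk},\theta_{ik}$ is structurally different when $U$ is inside versus outside $q$. For the \emph{inside} case, I would first observe that the three rays $UQ_i,UQ_j,UQ_k$ partition the full turn about $U$, so the visibility angles satisfy $\theta_{ij}+\theta_{jk}+\theta_{ik}=360^\circ$ with each angle in $(0^\circ,180^\circ)$. The key structural fact is that every visibility angle strictly exceeds the inner angle of $q$ opposite to it: extending the cevian $Q_iU$ until it meets side $Q_jQ_k$ and applying the exterior-angle theorem in the two resulting sub-triangles yields $\theta_{jk}=\widehat{Q_jUQ_k}=\hat{Q}_i+(\text{two positive base angles})$, hence $\theta_{jk}>\hat{Q}_i\geq\theta_s$, and symmetrically $\theta_{ik}>\hat{Q}_j\geq\theta_s$ and $\theta_{ij}>\hat{Q}_k\geq\theta_s$.

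Combining these two facts closes the inside case cleanly. Because the three angles sum to $360^\circ$, their minimum $\alpha_{\min}$ is at most $120^\circ$; because each exceeds $\theta_s$, we also have $\alpha_{\min}>\theta_s$. Since $\theta_s\leq60^\circ$ forces $180^\circ-\theta_s\geq120^\circ$, we conclude $\alpha_{\min}\in[\theta_s,180^\circ-\theta_s]$, an interval symmetric about $90^\circ$, so $|90^\circ-\alpha_{\min}|\leq 90^\circ-\theta_s$. As $\theta_E$ is by definition the angle closest to $90^\circ$, it is at least as close as $\alpha_{\min}$, giving $|90^\circ-\theta_E|\leq 90^\circ-\theta_s$, which is exactly \eqref{eq1}.

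For the \emph{outside} case, write $\phi\coloneqq 2\arctan\!\big(\tfrac{d_s}{2r}\tan\tfrac{\theta_s}{2}\big)$. I would first record the combinatorial change: when $U$ lies outside $q$, the three vertices subtend a total angle below $180^\circ$, and the angularly-middle vertex lies between the other two as seen from $U$, so the largest visibility angle equals the sum of the other two and is itself $<180^\circ$. Hence it suffices to exhibit one visibility angle inside $[\phi,180^\circ-\phi]$, and the crux is a lower bound on the largest visibility angle that rules out the degenerate situation in which all three angles are tiny (the configuration that spoils precision). I would obtain this by solving the worst-case placement: subject to the range constraints $\|UQ_i\|,\|UQ_j\|,\|UQ_k\|\leq r$, the side constraints $\geq d_s$, and the angle constraints $\geq\theta_s$, minimise the effective subtended angle over all admissible $U$ outside $q$. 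Driving $U$ to maximal range $r$ and the triangle to its extremal admissible shape (a minimal side $d_s$ meeting at the minimal inner angle $\theta_s$), the inscribed-angle relation for a chord of length $d_s$ viewed from range $r$, composed with the half-angle identity, produces the factor $\tfrac{d_s}{2r}\tan\tfrac{\theta_s}{2}$ and thus $\theta_E\geq\phi$; the symmetric upper bound $\theta_E\leq180^\circ-\phi$ then follows because the largest angle is strictly below $180^\circ$ while its two complementary parts are controlled by the same extremal estimate. A reassuring consistency check is the limit $d_s\to 2r$, where $\phi\to\theta_s$ and \eqref{eq:effective_LoS_angle_outside} collapses to \eqref{eq1}.

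I expect the outside case to be the main obstacle. Unlike the inside case, there is no clean ``sum equals $360^\circ$'' identity to exploit, and the bound depends jointly on $r$, $d_s$, and $\theta_s$, so one must correctly pin down the admissible configuration of $U$ and $q$ that simultaneously activates all three constraints. The delicate points are to verify that the minimising $U$ genuinely lies outside $q$, to track which of $\theta_{ij},\theta_{jk},\theta_{ik}$ plays the role of $\theta_E$ as the configuration varies, and to reduce the resulting trigonometric extremisation to the closed form $2\arctan\!\big(\tfrac{d_s}{2r}\tan\tfrac{\theta_s}{2}\big)$ via the half-angle substitution.
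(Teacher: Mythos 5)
Your treatment of the inside case is correct and essentially the paper's argument: the sum identity $\theta_{ij}+\theta_{ik}+\theta_{jk}=360^\circ$, the fact that each visibility angle dominates the opposite inner angle of $q$ (which you justify more explicitly than the paper, via the exterior-angle theorem), and the observation that the minimum visibility angle lies in $[\theta_s,120^\circ]\subseteq[\theta_s,180^\circ-\theta_s]$ together give \eqref{eq1}. Closing via ``$\theta_E$ is at least as close to $90^\circ$ as $\alpha_{\min}$'' is a slightly cleaner finish than the paper's identification $\theta_E=\theta_{jk}$, and is fine.

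The outside case, however, has a genuine gap: you have named the answer but not the mechanism that produces it. First, the factor $2\arctan\big(\tfrac{d_s}{2r}\tan\tfrac{\theta_s}{2}\big)$ does not come from an inscribed-angle relation for a chord of length $d_s$ seen from range $r$ (that would give something like $2\arcsin(d_s/2r)$ and cannot generate the $\tan(\theta_s/2)$ factor). In the paper it comes from the \emph{incircle} of $q$: when $U$ sees all three vertices under acute angles, the largest visibility angle is at least the angle $\theta_l$ subtended by the inscribed circle, with $\tan(\theta_l/2)=r_0/\|UH\|\geq r_0/r$, and the inradius is bounded below by $r_0\geq(d_s/2)\tan(\theta_s/2)$ from the right triangle formed at the smallest vertex angle and its tangency point. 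Without this construction your ``worst-case extremisation'' has no closed form to converge to. Second, your upper-bound argument does not close: if the largest visibility angle exceeds $180^\circ-\phi$, it is still possible that one of its two summands also exceeds $180^\circ-\phi$ while the other falls below $\phi$, so ``the largest angle is below $180^\circ$'' does not by itself place any visibility angle in $[\phi,180^\circ-\phi]$. The paper excludes this by partitioning the exterior of $q$ into four regions using the three circles having the sides of $q$ as diameters together with the circumscribed circle, and in the region where two visibility angles are obtuse it uses the fact that $U$ lying outside the circumcircle forces $\theta_E\leq\hat{Q}_j\leq 180^\circ-2\theta_s$. Some such case decomposition (tracking where the visibility angles change from acute to obtuse) is needed; a single extremal-configuration argument as you sketch it will not deliver both sides of \eqref{eq:effective_LoS_angle_outside}.
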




 \begin{proof}
     See Appendix A.
 \end{proof}

From Theorem  \ref{Theorem:effective-angle}, setting large MSD and MSA values for PRN deployment tasks leads to a UE's EVA anywhere in the layout being closer to a right angle.
However, this setting would require too many PRNs and might be infeasible due to physical restrictions.
Also, too small MSD and MSA values may result in an EVA far from the right angle as in Fig. \ref{fig:Trilateration-Accuracy}b) and c).  So, a PRN deployment that provides LoS conditions for all points within the layout and at least a triplet with suitable MSD and MSA values ensures improved positioning precision.

IPS based on AoA measures the angles between at least two PRNs and the UE, a principle known as triangulation.  Similarily, positioning uncertainty via triangulation might be large or unbounded due to errors in the angle measurement.
 We define a pair of PRNs as \emph{twin}, in case their distance is larger than an MSD value. Similarily, a PRN deployment requires to provides LoS to at least a twin  for every point in the layout to ensure precise positioning via triangulation.

\vspace{0mm}
\section{PRN deployment for Accurate and Precise Indoor Positioning}
\label{sec:3}

\vspace{0mm}

Let us start with the definition of $n$-LoS coverage of a deployment strategy of PRNs in a given layout, which identifies the event in which a UE located in any location within an indoor environment can experience a LoS conditions with at least $n$ PRNs within its service range.
 Authors in~\cite{abedi2021visible} give an overview of the essential definitions and the deployment approach to ensure 1-LoS coverage applicable to VLC networks. 
First, we shortly recall these definitions to maintain the coherence of the presentation.
Following this, we will discuss how these definitions can be used to deploy PRNs for 2-LoS and 3-LoS coverage.

 \begin{figure}[!t]
\centering
\hspace{0cm}\includegraphics[width=7.9cm]{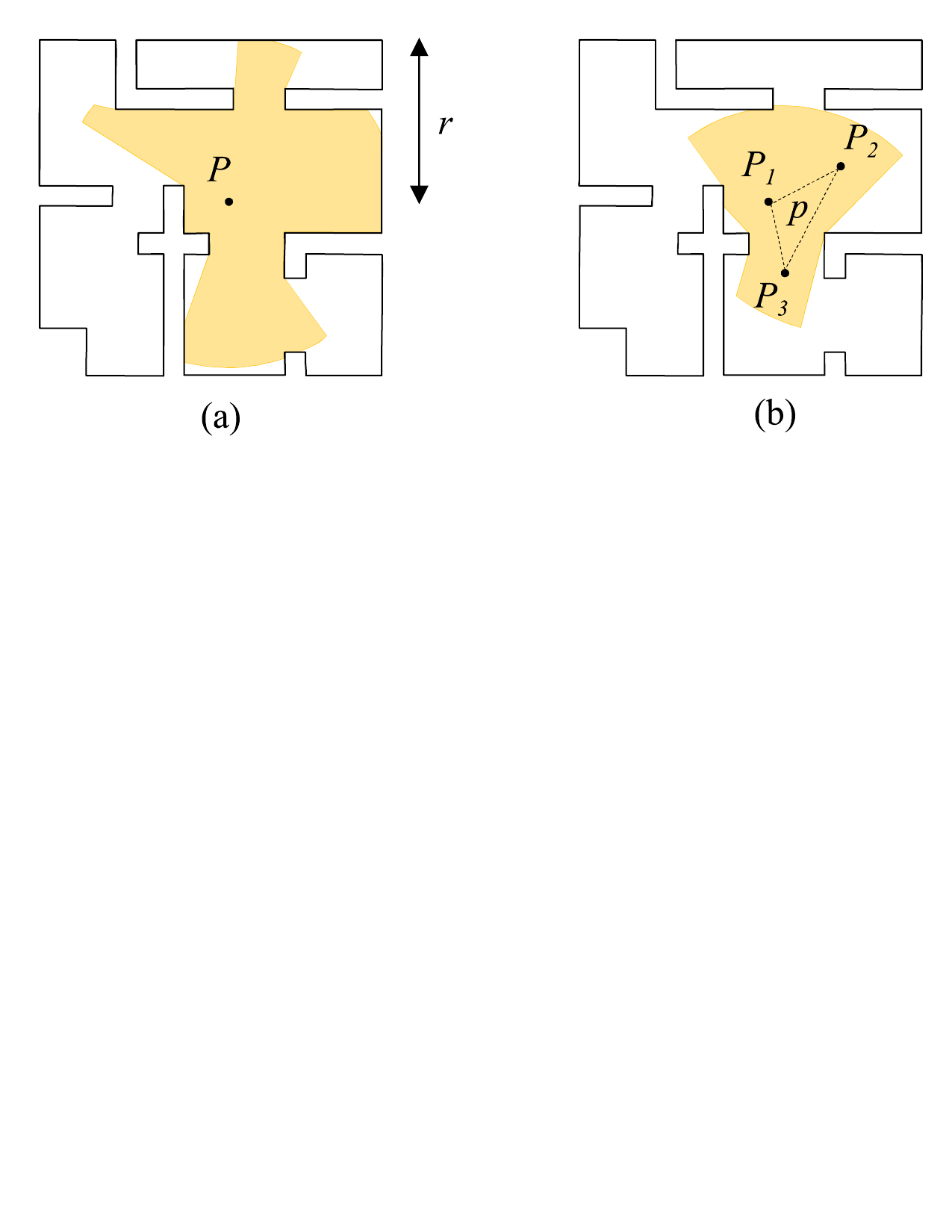}
    \vspace{-6.7cm}\caption{ LoS (visibility) area of a) the point $P$, and b) the triangle $p$. }\label{fig:LoS_areas}
\vspace{-0.4cm}
\end{figure}

Let $P$ be an arbitrary point, and $p$ be a polygon within a layout.
 Then, we introduce LoS area.
\begin{definition}\label{point_LoS.def}
  $\bf{LoS~area~of~a
 ~point}$ $\textbf{L}(P)$ refers to the locus $X$ such that the segment $XP$ entirely lies within the layout, and $\|XP\|\leq r$.
\end{definition}In other words, there is no layout edge between $P$ and the points in $\textbf{L}(P)$ while they are within the distance $r$ from $P$.
 
   \begin{definition}\label{polygon_LoS.def}
 $\bf{LoS~area~of~a~
 ~polygon}$ $\textbf{L}(p)$ refers to the set of all the points $X$ such that $X\in\textbf{L}(P_i)$ for $i=1, 2,..., l$, where $P_1, P_2, ..., P_l$ denote the vertices of $p$.
\end{definition}

It is immediate to conclude that  $\textbf{L}(p):=\bigcap_{i=1}^l\textbf{L}(P_i)$.
 Figure \ref{fig:LoS_areas} illustrates the LoS area of a point and a polygon in a  sample layout with a maximum range of $r$.
Note in Fig. \ref{fig:LoS_areas}b) that the vertex points $P_1$, $P_2$, and $P_3$ are not only openly visible, but they also fall within the distance $r$ from every point in $\textbf{L}(p)$. 

Every layout can be partitioned into triangles by adding a set of non-intersecting diagonals, known as \emph{triangulation partitioning}
, see \cite{de1997computational}.
However, smaller triangles are often required in order to obtain a better representation of a layout.

\begin{definition}\label{hyper-triangulation.def}$\bf{Hyper\,Triangulation}\,\mathcal{HT}(R)$uses the  triangulation partitioning as a basis and connects the midpoint of the largest side at every triangle, whose length exceeds $R$, to the opposite vertex ensuring no triangle side larger than $R$ left.
\end{definition}

\begin{figure}[!t]
\centering
\hspace{0.2cm}\includegraphics[width=8.2cm]{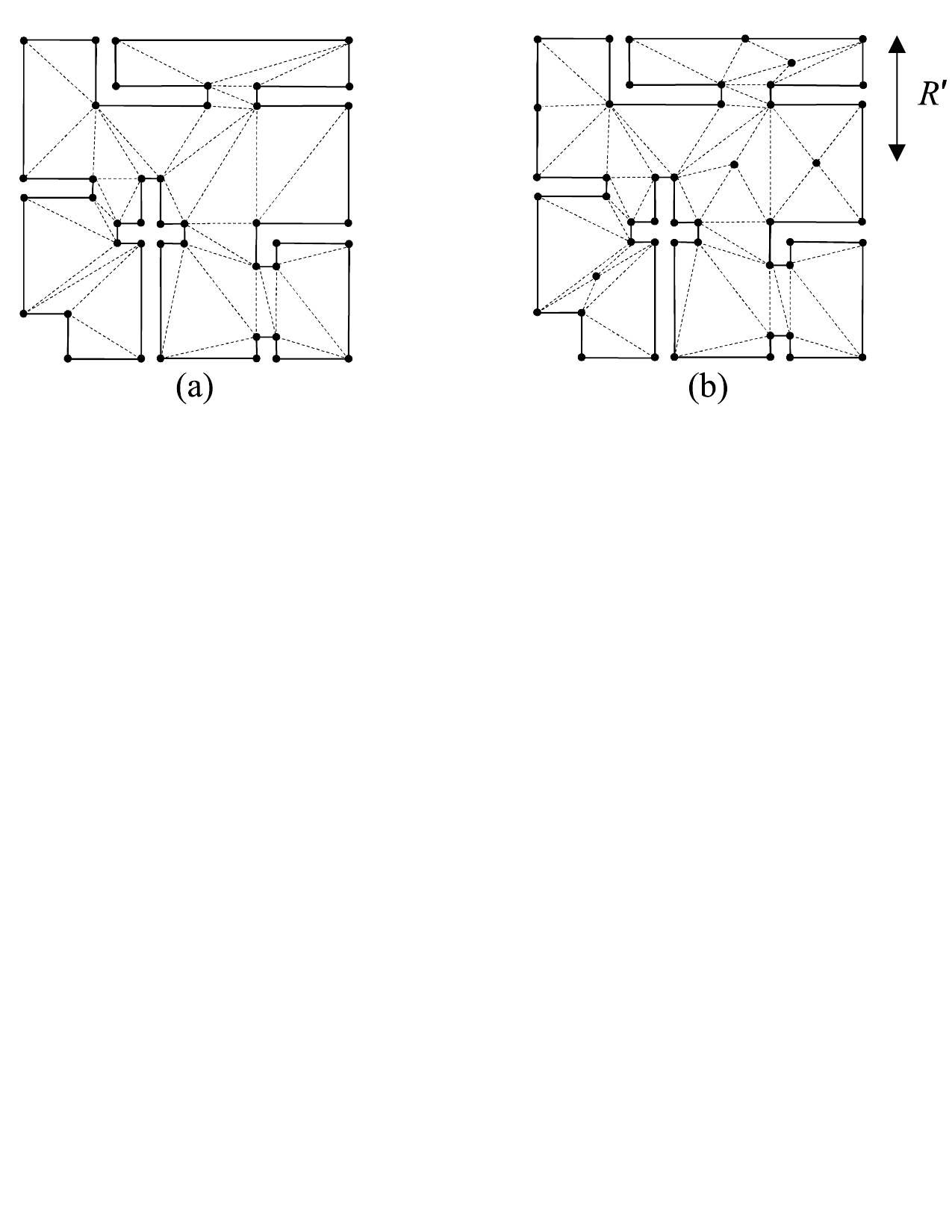}
    \vspace{-7.3cm}\caption{  Hyper triangulation by setting a)  $R=\infty$, and b) $R=R'$. }\label{fig:hyper_triangulation}
\vspace{-0.3cm}
\end{figure}

 Fig. \ref{fig:hyper_triangulation}a) shows triangulation or equivalently $\mathcal{HT}(R=\infty)$ for the sample layout, and Fig. \ref{fig:hyper_triangulation}b) represents $\mathcal{HT}(R=R')$, wherein no triangle side larger than $R'$ is left. 
In the following lemma, the significance of these definitions is illustrated.

\begin{lemma}\label{lemma:LoS}
    If $X\in \textbf{L}(q)$ for a point $X$, then $X \in \textbf{L}(Q)$, for any point $Q$ inside the polygon $q$. 
\end{lemma}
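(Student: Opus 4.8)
The plan is to verify directly the two conditions that define $X\in\mathbf{L}(Q)$: the range condition $\|XQ\|\le r$ and the no-blocking condition that the segment $XQ$ lies entirely inside the layout. For the range condition I would exploit that $q$ is a triangulation triangle, hence convex, and write $Q$ as a convex combination of its vertices $P_1,P_2,P_3$. Since the Euclidean distance $\|X-\cdot\|$ is a convex function, its maximum over $q$ is attained at a vertex, so $\|XQ\|\le\max_i\|XP_i\|$. The hypothesis $X\in\mathbf{L}(q)=\bigcap_i\mathbf{L}(P_i)$ gives $\|XP_i\|\le r$ for every vertex, and the range condition follows at once.

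For the no-blocking condition I would distinguish whether $X$ lies in $q$. If $X\in q$, then because $q$ is convex and, being a cell of the triangulation, contains no part of any wall in its interior, the whole segment $XQ\subseteq q$ is wall-free and we are done. If $X\notin q$, the segment $XQ$ crosses the boundary of the convex set $q$ exactly once, at a point $Y$ lying on one of its edges, say $P_aP_b$. The inner piece $YQ$ is contained in $q$ and is therefore wall-free, so the whole difficulty is concentrated in the outer piece $XY$.

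The key step is to show that the triangle $\triangle XP_aP_b$ lies entirely inside the layout, for then $XY\subseteq\triangle XP_aP_b$ is wall-free and $XQ=XY\cup YQ$ establishes the claim. Two of its sides, $XP_a$ and $XP_b$, are inside the layout because $X$ sees $P_a$ and $P_b$ (this is exactly $X\in\mathbf{L}(P_a)\cap\mathbf{L}(P_b)$), while the third side $P_aP_b$ is a triangulation edge and hence also inside the layout. I would then argue that if some interior point of $\triangle XP_aP_b$ lay outside the layout, the layout boundary would have to pass through the interior of the triangle while avoiding all three of its sides; since the boundary of a simply connected layout is a single closed curve, it cannot enter and leave the triangle without crossing a side, which contradicts the sides being wall-free. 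Hence the triangle is contained in the layout.

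I expect this last step---upgrading visibility of the two endpoints $P_a,P_b$ of a triangulation edge to visibility of the whole edge (equivalently, of the whole cell $q$)---to be the real obstacle, and it is precisely where the simple connectivity of the layout is indispensable. If free-standing obstacles were allowed, a small pillar placed between $X$ and the interior of $q$ could occlude an interior chord of $q$ while leaving all three vertices visible, so convexity of $q$ alone would not save the statement. I would therefore make explicit at the outset that the layout is modeled as a simple, hole-free polygon, so that each occlusion window is a single chord; such a chord cannot separate the interior of a triangle from all three of its vertices, which is exactly what makes the cell fully visible once its vertices are.
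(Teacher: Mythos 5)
Your argument is correct, and it is worth noting that the paper itself does not prove this lemma at all: it simply cites \cite[Lemma 1]{abedi2021visible}, so your proposal supplies a self-contained proof where the paper offers only a pointer. Your two-part decomposition is the natural one. The range half is clean: convexity of $\|X-\cdot\|$ forces its maximum over the convex cell $q$ onto a vertex, and $X\in\textbf{L}(q)=\bigcap_i\textbf{L}(P_i)$ bounds every vertex distance by $r$. The visibility half is also the standard argument, and you correctly isolate the one step that actually needs a hypothesis beyond convexity of $q$: upgrading visibility of the two endpoints of the crossed edge $P_aP_b$ to containment of the whole triangle $\triangle XP_aP_b$ in the layout. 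Your identification of simple connectivity (no free-standing obstacles) as the indispensable ingredient is exactly right, and your pillar example shows the lemma genuinely fails without it --- a caveat the paper never states explicitly, although its sample layouts are hole-free. One small sharpening I would suggest: rather than arguing that the boundary curve ``cannot enter and leave the triangle without crossing a side'' (a wall could poke into the triangle through a tangential contact with a side without transversally crossing it), argue with the exterior region instead --- any point of the layout's open exterior lying in the triangle's interior connects by a path within that open exterior to the unbounded component, and such a path must meet a side of the triangle at a point of the open exterior, contradicting the fact that all three sides lie in the closed layout. This closes the degenerate grazing cases and makes the topological step airtight.
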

\begin{proof}
See \cite[Lemma 1]{abedi2021visible}.
\end{proof}
 This means that if the polygon's vertices in the layout have LoS conditions to a PRN, then all of the points inside the polygon do as well. Assume partitioning a layout into $M$ polygons
 wherein we deploy PRNs such that all the vertices of each polygon have LoS conditions commonly to a set of $n$  PRNs, e.g., a PRN for $n=1$, a twin   PRNs for $n=2$, or a triplet of PRNs for $n=3$. Then,  Lemma \ref{lemma:LoS} implies that this PRN deployment satisfies $n$-LoS coverage.
Next, we address PRN deployment for 1-LoS and 2-LoS coverage, which is then applied as an initial step to 3-LoS coverage to reduce the complexity of the algorithms.

\subsection{PRN deployment to ensure 1-LoS coverage}
Many indoor positioning systems use the proximity principle to identify Cell IDs for various purposes, including location-aware services and asset tracking, requiring an LoS condition between the UE and only one PRN.
Accordingly, we aim to determine the minimum number of PRNs and their locations to provide 1-LoS coverage, i.e., a UE anywhere in an arbitrary layout can establish an LoS condition to at least one PRN. 
To this aim, we construct an LoS graph to model the layout and introduce partitioning this graph into the minimum number of cliques as an equivalent optimization problem.

  \begin{figure}[!t]
\centering
\hspace{0.2cm}\includegraphics[width=8.5cm]{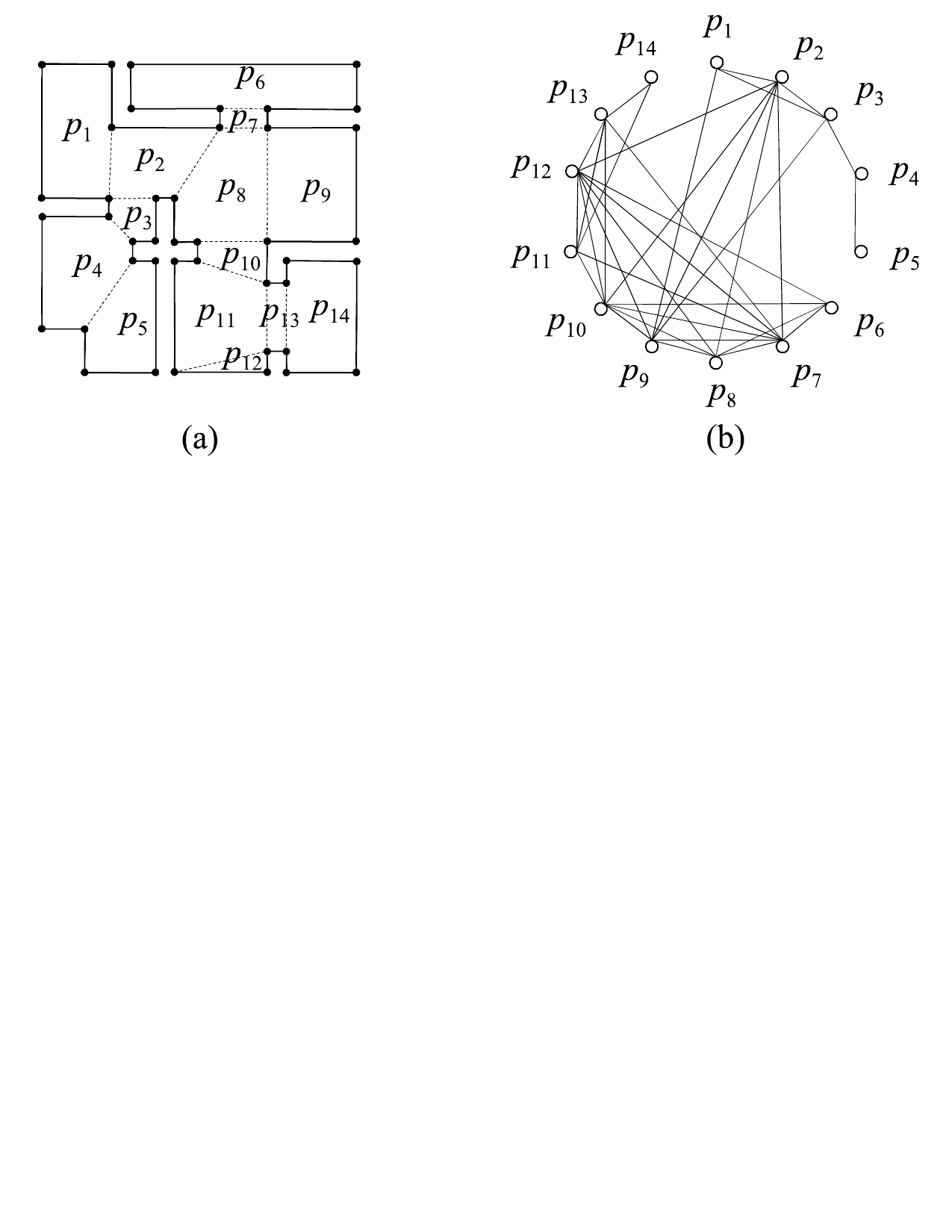}
    \vspace{-7.1cm}\caption{  a) The considered sample layout partitioned into $M=14$ convex polygons. b) The resulting primary LG with $r=\infty$. }\label{fig:PV_graph}
\vspace{-0.2cm}
\end{figure}

\begin{definition}\label{definition:PV-graph}$\hspace{-0.1cm}\bf{Primary~LoS}$ ${\bf{Graph}~(LG)}$, denoted by $\mathbb{G}^{(1)}$, refers to a simple unweighted  graph whose nodes represent the polygons (triangles)
$p_1$, ...., $p_M$ that partition the layout. 
A pair of nodes $p_i$  and $p_j$ is considered adjacent if their LoS areas overlap, i.e., $\textbf{L}(p_i)\cap \textbf{L}(p_j)\neq \emptyset$ for $1\leq i, j\leq M$ and $i\neq j$.
\end{definition}

PRN deployment begins with the creation of a primary LG.
Fig. \ref{fig:PV_graph}a) depicts partitioning of the sample layout into $M=14$ convex polygons and Fig. \ref{fig:PV_graph}b) shows the 
primary LG with $r=\infty$. In this case, since $\textbf{L}(p_2)\cap \textbf{L}(p_{10})\neq \emptyset$ and $\textbf{L}(p_2)\cap \textbf{L}(p_4)= \emptyset$, the nodes $p_{2}$ and  $p_{10}$ are adjacent, whereas the nodes $p_{2}$ and  $p_{4}$ are not adjacent.
 Suppose a set of nodes $q_1, q_2, ..., q_x$ forming a clique in a LG, say $C[q_1, q_2, ..., q_x]\subset \mathbb{G}^{(1)}$. Then, the LoS area of the clique is defined as $\textbf{L}(C):=\bigcap_{j=1 }^{x}\textbf{L}(q_j)$.

  \begin{figure}[!t]
\centering
\hspace{0.2cm}\includegraphics[width=8.2cm]{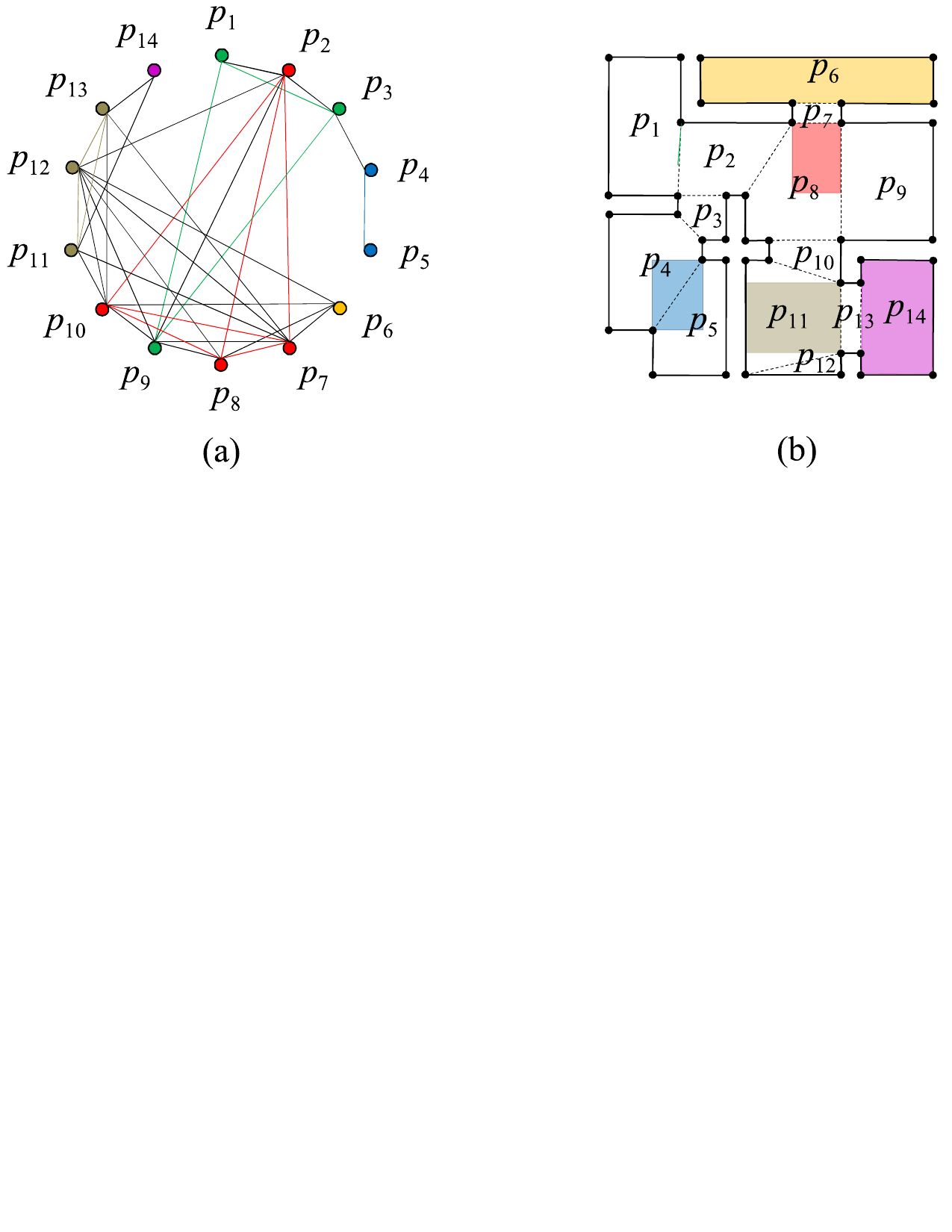}
    \vspace{-6.8cm}\caption{ A non-optimal number of PRNs and their placement areas to ensure 1-LoS coverage with $r=\infty$; a) A feasible clique partitioning of $\mathbb{G}^{(1)}$. b) The LoS areas of the cliques. }\label{fig:non-optimal}
\vspace{-0.2cm}
\end{figure}

 \begin{table}[t]
\caption{ Primary Maximal Clique  Clustering}\label{tabel:Max_Clique_partitioning}
\vspace{-0.3cm}
\begin{center}
\hspace{0cm}\begin{tabular}{ l | l}\hline\hline

{\footnotesize{~}}&
{\footnotesize{{\footnotesize{\textbf{Input:} ~~$\mathbb{G}^{(1)}[p_1,p_2,...,p_M]$ ~~~$\%$ The primary LG }}}} \\

\hline

{\footnotesize{1}}&
{\footnotesize{{\footnotesize{{$k\longleftarrow 0$ } }}}} \\

{\footnotesize{2}}&
{\footnotesize{{\footnotesize{{\bf{While}} ~~~$\mathbb{G}^{(1)}\neq \emptyset$: }}}} \\

{\footnotesize{3}}&{\footnotesize{{\footnotesize{~~~~$\mathbb{G}^{(1)}[q_1,q_2,..., q_M]\leftarrow$  $\texttt{Ascending-Sort}(\mathbb{G}^{(1)})$  }}}} \\

{\footnotesize{4}}&
{\footnotesize{{\footnotesize{{~~~~$k\longleftarrow k+1 $}}}}}\\

 {\footnotesize{5}}&
{\footnotesize{{\footnotesize{~~~~$C_k\longleftarrow \emptyset$}}}} \\

 {\footnotesize{6}}&{\footnotesize{{\footnotesize{~~~~{\bf{For}}~ $i$~ {\bf{from}} $1$ \bf{to} $M$: }}}} \\

 {\footnotesize{7}}&
{\footnotesize{{\footnotesize{~~~~~~~~{\bf{If}} ~$\texttt{Is\_Clique}(\mathbb{G}^{(1)},C_k\cup q_i)$==$1~ \&~ \textbf{L}(C_k\cup q_i)\neq \emptyset$: }}}} \\


{\footnotesize{8}} & {\footnotesize{~~~~~~~~~~~~~~~~~$C_k\longleftarrow  C_k \cup q_i$  }}\\ 


{\footnotesize{9}}&
{\footnotesize{{\footnotesize{~~~~$\mathbb{G}^{(1)}\longleftarrow \mathbb{G}^{(1)}-C_k~~~\%$ $C_k$: maximal clique }}}} \\
{\footnotesize{10}}&
{\footnotesize{{\footnotesize{~~~~$M\longleftarrow |\mathbb{G}^{(1)}|~~~~~~~~~~~\%$ The number of nodes in $\mathbb{G}^{(1)}$}}}} \\

{\footnotesize{11}}&
{\footnotesize{{\footnotesize{~~~~$C^{(1)}_k:=C_k$ $~~~~\%~$ The $k^{\text{th}}$ primary clique}}}} \\

{\footnotesize{12}}&
{\footnotesize{{\footnotesize{~~~~$\textbf{A}^{(1)}_k:=\textbf{L}(C^{(1)}_k)$$~~~~\%~$ The $k^{\text{th}}$ primary area}}}} \\

{\footnotesize{13}}&
{\footnotesize{{{\footnotesize{{\bf{Return}  }}}}}}\\

\hline

{\footnotesize{~}}&
{\footnotesize{{{\footnotesize{{\bf{Outputs:}~~ }    }}}}}\\

{\footnotesize{~}}&
{\footnotesize{{{\footnotesize{{$\mathcal{S}^{(1)}:=\{C^{(1)}_1, C^{(1)}_2,..., C^{(1)}_g\}$} ~$\%$ The set of primary cliques  }}}}}\\

{\footnotesize{~}}&
{\footnotesize{{\footnotesize{$\textbf{A}^{(1)}_1, \textbf{A}^{(1)}_2,..., \textbf{A}^{(1)}_g ~~~~~~~~~~~~~~~~\%$ The primary areas}}}} \\

\hline \hline
\end{tabular}
\end{center}
\vspace{-0.5cm}
\end{table}

\begin{lemma}\label{Lemma:Clique-partitioning}
With $g$ PRNs, each placed within the non-empty $\textbf{L}(C_1)$, $\textbf{L}(C_2)$, ..., and $\textbf{L}(C_g)$, 1-LoS coverage is ensured, in which the cliques $ C_1$, $ C_2$, ..., $ C_g$ partitions the nodes of $\mathbb{G}^{(1)}$.
\end{lemma}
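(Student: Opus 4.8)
The plan is to reduce the global coverage claim to a single pointwise statement and then discharge that statement with Lemma \ref{lemma:LoS}. First I would fix an arbitrary UE position $U$ anywhere in the layout. Because the polygons $p_1,\dots,p_M$ partition the entire service area, $U$ lies inside at least one of them; call it $p_i$. Because the cliques $C_1,\dots,C_g$ partition the nodes of $\mathbb{G}^{(1)}$, the node $p_i$ belongs to exactly one clique, say $C_k$, and by hypothesis a PRN---denote its location by $Z_k$---has been placed in the non-empty region $\textbf{L}(C_k)$.

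Next I would exploit the intersection structure of the clique LoS area. By definition $\textbf{L}(C_k)=\bigcap_{q_j\in C_k}\textbf{L}(q_j)$, and since $p_i$ is one of the nodes $q_j$ constituting $C_k$, it follows that $\textbf{L}(C_k)\subseteq\textbf{L}(p_i)$. Hence $Z_k\in\textbf{L}(p_i)$; in words, the PRN sees every vertex of the polygon $p_i$ within range $r$.

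The decisive step is then Lemma \ref{lemma:LoS}. Applying it with the point taken as $Z_k$ and the polygon taken as $p_i$, the hypothesis $Z_k\in\textbf{L}(p_i)$ yields $Z_k\in\textbf{L}(Q)$ for every point $Q$ inside $p_i$; in particular $Z_k\in\textbf{L}(U)$. Because visibility within range $r$ is a symmetric relation---the defining condition that the segment $UZ_k$ lie inside the layout and satisfy $\|UZ_k\|\le r$ is invariant under interchanging the endpoints---this is equivalent to $U\in\textbf{L}(Z_k)$, i.e., the UE at $U$ enjoys a LoS condition to the PRN $Z_k$ within its service range. Since $U$ was arbitrary, 1-LoS coverage follows.

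The argument is direct once Lemma \ref{lemma:LoS} is in hand, so the only delicate point---and the step I would treat most carefully---is the choice of the containing polygon when $U$ lies on a shared edge or vertex of the partition: there $U$ belongs to several closed polygons simultaneously, but the reasoning above succeeds for any one of them, so the conclusion is unaffected. I would also state the symmetry of the visibility relation explicitly, since it is precisely what converts the PRN-centred conclusion of Lemma \ref{lemma:LoS} into the UE-centred LoS guarantee demanded by the definition of $n$-LoS coverage.
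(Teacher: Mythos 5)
Your proof is correct and follows essentially the same route as the paper's: both reduce the claim to Lemma~\ref{lemma:LoS} via the containment $\textbf{L}(C_k)\subseteq\textbf{L}(p_i)$ for the polygon $p_i$ containing the UE and the clique $C_k$ containing $p_i$. Your version is simply more explicit about the symmetry of the visibility relation and about points on shared boundaries, neither of which changes the argument.
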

\begin{proof}
Let $C_i[q_1, q_2,..., q_x]$ be a clique in $\mathbb{G}^{(1)}$ with $1\leq i\leq g$. From Lemma \ref{lemma:LoS}, a PRN located anywhere inside $\textbf{L}(C_i)$ covers the whole area of each  one of the polygons $q_1, q_2,..., q_x$.
Therefore, since the set of cliques $ C_1$, $ C_2$, ..., $ C_g$ covers all the nodes of $\mathbb{G}^{(1)}$, Lemma \ref{Lemma:Clique-partitioning} follows.
\end{proof}
Fig. \ref{fig:non-optimal}a) visualizes partitioning $\mathbb{G}^{(1)}$ into $g=6$ cliques, distinguished by $6$ different colors, whose LoS areas are given in Fig. \ref{fig:non-optimal}b) with the same color scheme.
 It is observed that placing a set of $g = 6$ PRNs, one lying inside each LoS area, ensures 1-LoS coverage of the sample layout. 
Nevertheless, we still require an algorithm for minimizing $g$.

 Lemma \ref{Lemma:Clique-partitioning} states that partitioning a primary LG into the minimum number of cliques results in the minimum number of PRNs necessary to achieve 1-LoS coverage.
Partitioning a graph into a minimum number of cliques is known as the \emph{minimum clique cover} problem in the literature \cite{espelage2001solve}. 
Algorithm \ref{tabel:Max_Clique_partitioning} presents the steps in partitioning the primary LG into a minimum number of cliques with non-empty LoS areas via the primary Maximal Clique Clustering MCC method.
Here, we first sort the nodes in an ascending degree order and choose the one with the minimum degree as a cluster. Then, we  add the rest of the nodes to this cluster in order as long as it forms a clique with non-empty LoS area. Finally, we remove this maximal clique from the primary LG and start over the algorithm 
until there are no nodes left.
The resulting maximal cliques are labeled as \emph{primary cliques} $C_i^{(1)}$ with the associated \emph{primary areas} $\textbf{A}_i^{(1)}:=\textbf{L}(C_i^{(1)})$ for $1 \leq i \leq g$.

 The steps of the primary MCC method are illustrated in Figure \ref{fig:optimal-single-coverage}. In Fig. \ref{fig:optimal-single-coverage}a), we select $p_5$ as the minimum degree node and add $p_4$ to form the maximal clique.  Removing this maximal clique, Fig. \ref{fig:optimal-single-coverage}b-d) display the remaining steps that end up with $g=4$ maximal cliques in Fig. \ref{fig:optimal-single-coverage}e), labeled as the primary cliques $C_1^{(1)}, C_2^{(1)}, C_3^{(1)}$ and $C_4^{(1)}$. So, taking into account the primary area  $\textbf{A}_i^{(1)}:= \textbf{L}(C_i^{(1)})$ for $1\leq i\leq g=4$, Fig. \ref{fig:optimal-single-coverage}f) illustrates that deploying the total number of four PRNs, anywhere inside each one of $\textbf{A}_1^{(1)},\textbf{A}_2^{(1)}, \textbf{A}_3^{(1)}$, and $\textbf{A}_4^{(1)}$, ensures 1-LoS coverage of the sample layout.

  \begin{figure}[!t]
\centering
\hspace{0.1cm}\includegraphics[width=9cm]{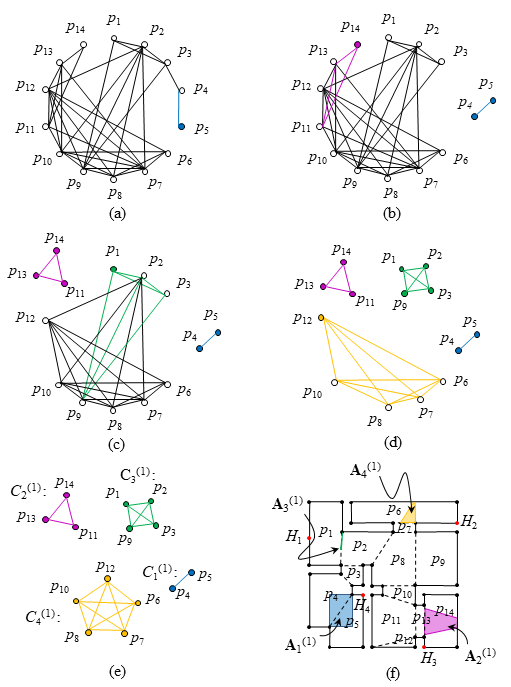}
    \vspace{-0.3cm}\caption{Determining the minimum number of PRNs and their placement  to ensure 1-LoS coverage  with $r=\infty$; a) the primary LG, a-e) the steps of primary MCC method, e) the primary cliques, and f) the primary areas. }\label{fig:optimal-single-coverage}
\vspace{-0.2cm}
\end{figure}

\begin{lemma}\label{Lemma:lower-bound}
Assume finding a set of $t$ points $H_1$, $H_2$,..., $H_t$ in a layout whose LoS areas are pairwise disjoint, i.e., $\textbf{L}(H_i)\cap \textbf{L}(H_j)= \emptyset$ for all $i\neq j$, known as a \emph{hidden} set of points. Then, $t$ reveals a lower bound to the minimum number of PRNs for a 1-LoS coverage, i.e., $t\leq g$.
\end{lemma}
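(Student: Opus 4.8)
The plan is to argue by a pigeonhole/injectivity principle: each hidden point must be served by its own private PRN, so that any valid deployment is forced to use at least $t$ PRNs. First I would fix an arbitrary 1-LoS deployment that uses the minimum number $g$ of PRNs, located at points $Y_1,\dots,Y_g$, and recall that by definition of 1-LoS coverage this deployment covers every point of the layout; in particular, each hidden point $H_i$ enjoys a LoS condition to at least one PRN $Y_{j(i)}$ lying within its service range.

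The key observation I would establish is that the LoS relation between a PRN and a point is symmetric. By Definition~\ref{point_LoS.def}, the statement $H_i\in\textbf{L}(Y_{j(i)})$ means that the segment $H_iY_{j(i)}$ lies entirely inside the layout and has length at most $r$, which is verbatim the condition $Y_{j(i)}\in\textbf{L}(H_i)$. Hence ``the PRN at $Y$ covers the hidden point $H_i$'' is equivalent to the set membership $Y\in\textbf{L}(H_i)$.

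With this equivalence in hand, I would show that the assignment $i\mapsto j(i)$ is injective. Suppose, for contradiction, that a single PRN at $Y$ covered two distinct hidden points $H_i$ and $H_{i'}$; then $Y\in\textbf{L}(H_i)$ and $Y\in\textbf{L}(H_{i'})$, so $Y\in\textbf{L}(H_i)\cap\textbf{L}(H_{i'})$, contradicting the defining disjointness $\textbf{L}(H_i)\cap\textbf{L}(H_{i'})=\emptyset$ of a hidden set. Therefore the $t$ hidden points are served by $t$ pairwise distinct PRNs, which forces $g\geq t$, i.e., $t\leq g$.

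I expect the argument to be short; the point that most needs care is the symmetry step, since the whole proof hinges on translating ``the deployed PRN is visible from $H_i$ and within range'' into the membership $Y\in\textbf{L}(H_i)$ of Definition~\ref{point_LoS.def}. A secondary subtlety worth one sentence is that the bound must be proved against an \emph{arbitrary} minimum deployment rather than the specific clique-based one produced by Algorithm~\ref{tabel:Max_Clique_partitioning}; this is what lets $t\leq g$ serve as a genuine certificate of optimality whenever a hidden set of size $g$ can be exhibited.
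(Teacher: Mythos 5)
Your proof is correct and follows essentially the same route as the paper's: the paper's one-line argument that ``we require to place at least a single PRN inside each one of the disjoint areas $\textbf{L}(H_1),\dots,\textbf{L}(H_t)$'' is precisely your symmetry-plus-injectivity observation, which you have simply made explicit. The extra care you take with the symmetry of Definition~\ref{point_LoS.def} and with arguing against an arbitrary minimum deployment is a welcome tightening but not a different proof.
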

\begin{proof}
 A 1-LoS coverage of the layout requires to cover at least the points $H_1, H_2,..., H_t$. So, we require to place at least a single PRN inside each one of the disjoint areas $\textbf{L}(H_1), \textbf{L}(H_2),..., \textbf{L}(H_t)$. Thus, Lemma \ref{Lemma:lower-bound} follows. 
\end{proof}

 Fig. \ref{fig:optimal-single-coverage}f) shows a hidden set of $t=4$ points $H_1$ to $H_4$, indicated by red dots. So from Lemma \ref{Lemma:lower-bound}, we have $t=4\leq g$. Therefore, the value $g=4$  derived by the primary MCC method in Figure \ref{fig:optimal-single-coverage} is optimal for a 1-LoS coverage.

\begin{table}[t]
\caption{ Edge Elimination}\label{tabel:Edge-elimination}
\vspace{-0.3cm}
\begin{center}
\hspace{0cm}\begin{tabular}{ l | l}\hline\hline

{\footnotesize{~}}&
{\footnotesize{{\footnotesize{\textbf{Inputs:}  {\footnotesize{{\footnotesize{$\mathbb{G}^{(1)}[p_1,p_2,...,p_M]$ ~~~~~~~~$\%$  The primary LG}}}}}}}} \\


{\footnotesize{~}}&
~~~~~~~~{\footnotesize{{\footnotesize{$\mathcal{S}^{(1)}, \textbf{A}^{(1)}_1,...,\textbf{A}^{(1)}_g$ ~~$\%$  Outputs of Algorithm \ref{tabel:Max_Clique_partitioning}  }}}} \\


\hline

{\footnotesize{1}}&
{\footnotesize{{\footnotesize{$\mathbb{G}^{(2)}\longleftarrow\mathbb{G}^{(1)}$  }}}} \\

{\footnotesize{2}}&
{\footnotesize{{\footnotesize{{\bf{For}} ~~$i$ ~{\bf{from}} $1$ 
{\bf{to}} $M$: }}}} \\

{\footnotesize{3}}&
{\footnotesize{{~~~\footnotesize{{\bf{For}} ~~$j$~ {\bf{from}} $1$ 
{\bf{to}} $M$: }}}} \\

{\footnotesize{4}}&
{\footnotesize{{\footnotesize{{~~~~~~\bf{If}}~ $p_i,p_j\in C_k^{(1)}$~$\&$~$\textbf{L}(p_i)\cap \textbf{L}(p_j)-\textbf{F}_{d_s}(\textbf{A}_k^{(1)})$==$\emptyset$: }}}} \\


{\footnotesize{5}}&
{\footnotesize{{\footnotesize{{~~~~~~~~~~~~~ \texttt{Edge-Eliminate}($\mathbb{G}^{(2)},p_i,p_j$) } }}}} \\

{\footnotesize{6}}&
{\footnotesize{{{\footnotesize{{\bf{Return} }  }}}}}\\

\hline

{\footnotesize{~}}&
{\footnotesize{{{\footnotesize{{\bf{Output:} ~ } {\footnotesize{{{\footnotesize{{$\mathbb{G}^{(2)}[p_1,p_2,...,p_M]$} ~~~~~$\%$ The secondary LG  }}}}}  }}}}}\\


\hline \hline

\end{tabular}
\end{center}
\vspace{-0.7cm}
\end{table}

\vspace{0mm}
\subsection{PRN deployment to ensure 2-LoS coverage}
\label{sec:double-coverage}
\vspace{0mm}
Given an MSD value, we determine the minimum number of PRNs and their placement  to ensure 2-LoS coverage, i.e., a UE anywhere in the layout has LoS conditions to at least a \emph{twin} PRNs. An IPS that applies the triangulation principle  represents an example wherein LoS conditions between a UE and two PRNs are necessary for positioning. A naive idea for  satisfying 2-LoS coverage is to place two PRNs inside each primary area, e.g., Fig. \ref{fig:optimal-single-coverage}f). However, such deployment might be infeasible if the given   
 MSD value is relatively large.

\begin{definition} 
\label{def:forbidden-region}$\hspace{-0.1cm}\bf{Forbidden ~region}$ $\textbf{F}_{d_s}(\textbf{A})$ refers to the locus whose distance to any point within the input area  $\textbf{A}$ is shorter than the MSD value, i.e., $\textbf{F}_{d_s}(\textbf{A}):=\bigcap_{O\in\textbf{A}} \{Q'\big{|} \|Q'O\| < d_s \}$.
\end{definition}

 From Definition  \ref{def:forbidden-region}, any circle with radius $d_s$ and its center inside $\textbf{A}$ covers the entire region $\textbf{F}_{d_s}(\textbf{A})$. The forbidden region is a self-inverse area-area mapping, i.e., $\textbf{F}_{d_s}(\textbf{F}_{d_s}(\textbf{A}))=\textbf{A}$ for a non-empty $\textbf{A}$. If $\textbf{A}$ is too large, we have $\textbf{F}_{d_s}(\textbf{A})=\emptyset$. Besides, the farthest point of a bounded area to any point on the plane coincides on the boundary. Thus, we can equivalently substitute $O\in \textbf{A}$ in Definition \ref{def:forbidden-region} with 
 $O\in \partial (\textbf{A})$, where $\partial(\textbf{A})$ denotes the boundary of $\textbf{A}$.
In case $\partial(\textbf{A})$ forms a polygon with straight edges, we can further limit the center points only to the vertices of $\textbf{A}$, i.e., $\textbf{F}_{d_s}(\textbf{A}):=\bigcap_{O\in v(\textbf{A})} \{Q'\big{|} \|Q'O\| < d_s \}$,  where $v(.)$ stands for the set of vertex points.

The primary areas and forbidden regions are key elements forming a set of twin PRNs. So, we modify the primary LG so that any pair of PRNs serving the same UE forms a twin. 
\begin{definition}
    \label{def:secondary PV graph}$\hspace{-0.1cm}$ $\textbf{Secondary}~\textbf{LoS~Graph}~ (\textbf{LG})$, denoted by ${\mathbb{G}^{(2)}}$,  refers to a modified version of the primary LG using Edge Elimination EE method by removing the edges whose
\begin{itemize}
    \item both ending nodes fall within a  common primary clique $C_k^{(1)}$ for some $1\leq k  \leq g$, and 
    \item the overlapping LoS area of the ending nodes entirely lies inside $\textbf{F}_{d_s}(\textbf{A}_k^{(1)})$.
\end{itemize}
\end{definition}
 To clarify the EE method in Algorithm  \ref{tabel:Edge-elimination}, the four primary areas in Fig. \ref{fig:optimal-single-coverage}f) are represented in Fig. \ref{fig:edgeremoval}a) whose forbidden regions are indicated by dotted patterns. Representing $\mathbb{G}^{(1)}$ in Fig. \ref{fig:edgeremoval}b), we eliminate three edges, shown by red dashed lines, to attain $\mathbb{G}^{(2)}$. As an example, considering the nodes $p_6$ and $p_{12}$ both within $C_4^{(1)}$, the area $\textbf{L}(p_6)\cap\textbf{L}(p_{12})$ totally lies inside  $\textbf{F}_{d_s}(\textbf{A}_4^{(1)})$, and thus the edge between $p_6$ and $p_{12}$ is eliminated. It is while the edge connecting $p_1$ and $p_9$ remains still, as both nodes are the components of $C_3^{(1)}$ and the area $\textbf{L}(p_1)\cap\textbf{L}(p_{9})-\textbf{F}(\textbf{A}_3^{(1)})$ is non-empty.  The secondary LG is a starting point in PRN deployment for 2-LoS coverage.

\begin{figure}[!t]
\centering
\hspace{-0.3cm}\includegraphics[width=9.1cm]{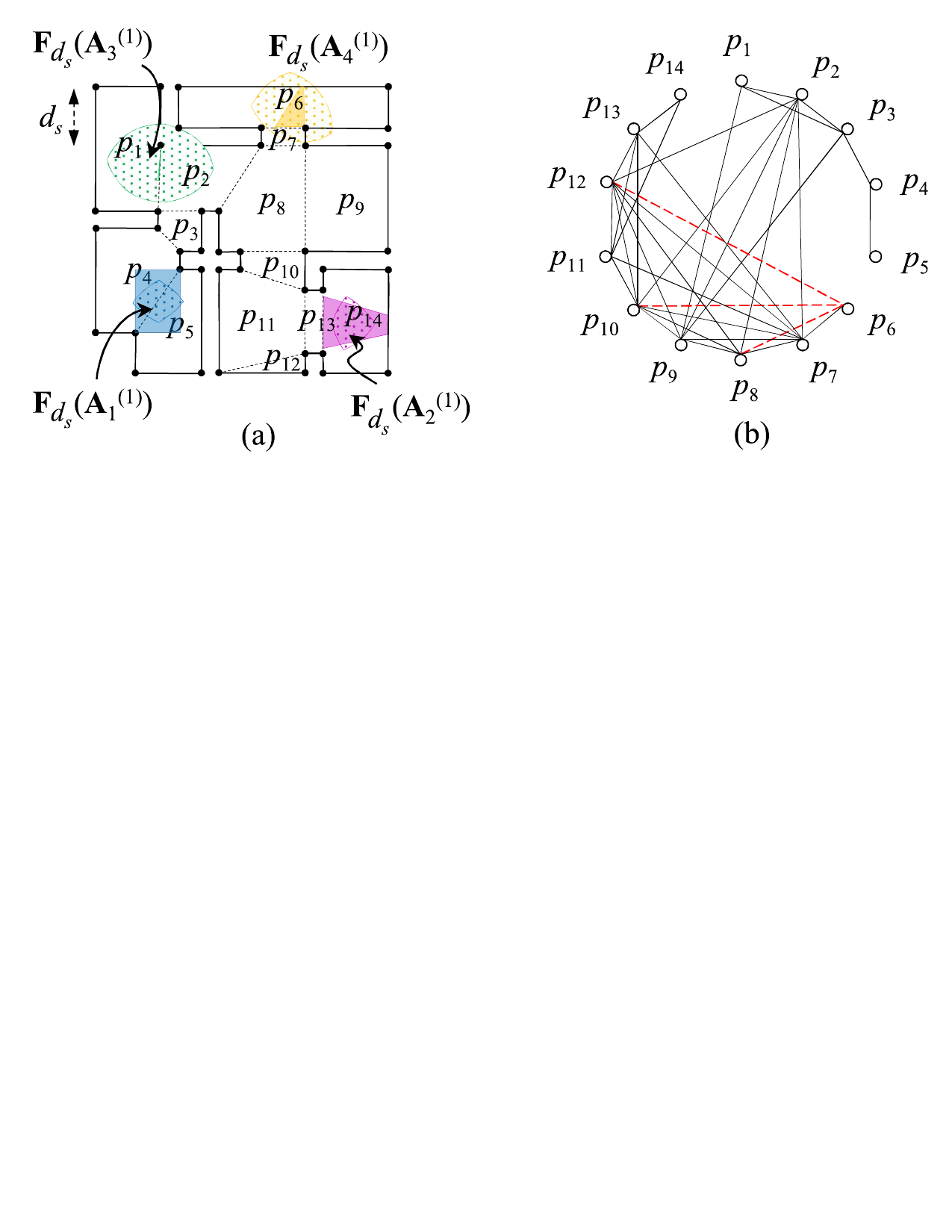}
    \vspace{-7.7cm}\caption{ Edge elimination method; a) Forbidden regions of the primary areas, and b) eliminated edges.}\label{fig:edgeremoval}
\vspace{-0.3cm}
\end{figure}

\begin{definition}\label{def:subset-clique-cover} $\hspace{-0.1cm}\bf{Clique~mapping}~$
$\mathcal{M}(\mathcal{S}^{(1)}, C)$ takes the set of primary cliques and an arbitrary clique $C\in \mathbb{G}^{(2)}$ as  inputs and returns all the primary areas whose associated primary cliques share at least one node with $C$, i.e., $\mathcal{M}(\mathcal{S}^{(1)}, C):=\{ \textbf{A}^{(1)}_{k} | C^{(1)}_k\in \mathcal{S}^{(1)}, C\cap C_{k}^{(1)}\neq\emptyset, 1\leq k \leq g \}$.
\end{definition}
This mapping  simplifies the following definition.
\begin{definition}\label{def:permitted-area} $\hspace{-0.1cm}\bf{Well\hspace{-0.05cm}-\hspace{-0.05cm}spaced~area}~$  $\textbf{W}_{d_s}(\mathcal{S}^{(1)},C)$  takes the set of primary cliques and an arbitrary clique $C\in \mathbb{G}^{(2)}$ as  inputs and returns the area that lies inside $\textbf{L}(C)$ while outside the union of forbidden regions for the primary  areas in $\mathcal{M}(\mathcal{S}^{(1)}, C)$, i.e.,  $\textbf{W}_{d_s}(\mathcal{S}^{(1)},C):=\textbf{L}(C)-\bigcup_{u\in\mathcal{M}(\mathcal{S}^{(1)}, C)}\textbf{F}_{d_s}(u)$.
\end{definition}

\begin{table}[t]
\caption{ Secondary Maximal Clique Clustering}\label{tabel:2-tier-Max_Clique_partitioning}
\vspace{-0.3cm}
\begin{center}
\hspace{0cm}\begin{tabular}{ l | l}\hline\hline

{\footnotesize{~}}&
{\footnotesize{{\footnotesize{\textbf{Inputs:} {\footnotesize{{\footnotesize{~$\mathcal{S}^{(1)}$ ~~~~~~~~~$\%$  Output of Algorithm \ref{tabel:Max_Clique_partitioning}  }}}} }}}} \\

{\footnotesize{~}}&
{\footnotesize{{~~~~~~~~~~~\footnotesize{$\mathbb{G}^{(2)}[p_1,p_2,...,p_M]$ ~~$\%$ Output of Algorithm \ref{tabel:Edge-elimination} }}}} \\



\hline

{\footnotesize{1}}&
{\footnotesize{{\footnotesize{$k\longleftarrow 0$ }}}} \\

{\footnotesize{2}}&
{\footnotesize{{\footnotesize{{\bf{While}} ~~~$\mathbb{G}^{(2)}\neq \emptyset$: }}}} \\

{\footnotesize{3}}&{\footnotesize{{\footnotesize{~~~~$\mathbb{G}^{(2)}[q_1,q_2,..., q_M]\longleftarrow$  $\texttt{Ascending\_Sort}(\mathbb{G}^{(2)})$  }}}} \\

{\footnotesize{4}}&
{\footnotesize{{\footnotesize{~~~~$k\longleftarrow k+1$}}}}  \\

 {\footnotesize{5}}&
{\footnotesize{{\footnotesize{~~~~$C_k\longleftarrow \emptyset$}}}} \\

 {\footnotesize{6}}&{\footnotesize{{\footnotesize{~~~~{\bf{For}}~ $i$ ~\bf{from} $1$ \bf{to} $M$: }}}} \\

 {\footnotesize{7}}&
{\footnotesize{{\footnotesize{~~~~~~~~{\bf{If}} ~~$\texttt{Is\_Clique}$ ($\mathbb{G}^{(2)}$, $C_k\cup q_i$) ==$1$: }}}} \\

{\footnotesize{8}}&
{\footnotesize{{\footnotesize{~~~~~~~~~~~{\bf{If}}~~  $\textbf{W}_{d_s}(\mathcal{S}^{(1)},C_k\cup q_i)\neq \emptyset~~ \% $ Definition \ref{def:permitted-area} }}}}  \\

{\footnotesize{9}} & {\footnotesize{~~~~~~~~~~~~~~~~$C_k\longleftarrow  C_k \cup q_i$  }}\\ 


{\footnotesize{10}}&
{\footnotesize{{\footnotesize{~~~~$\mathbb{G}^{(2)}\longleftarrow \mathbb{G}^{(2)}-C_k~~~\%$ $C_k$: The maximal clique }}}} \\
{\footnotesize{11}}&
{\footnotesize{{\footnotesize{~~~~$M\longleftarrow |\mathbb{G}^{(2)}|$, $C^{(2)}_k:= C_k~\% ~C^{(2)}_k\hspace{-1mm}:$ $k^{\text{th}}$ secondary clique }}}} \\


{\footnotesize{12}}&
{\footnotesize{{\footnotesize{~~~~$\textbf{A}^{(2)}_k:= \textbf{W}_{d_s}(\mathcal{S}^{(1)},C^{(2)}_k)~~~\%$ The $k^{\text{th}}$ secondary area}}}} \\ 




{\footnotesize{13}}&
{\footnotesize{{{\footnotesize{{\bf{Return}}   }}}}}\\

\hline

{\footnotesize{~}}&
{\footnotesize{{{\footnotesize{{\bf{Outputs:}}    }}}}}{\footnotesize{{{\footnotesize{{$\textbf{A}^{(2)}_1, \textbf{A}^{(2)}_2,..., \textbf{A}^{(2)}_{g'}~~~~~\%$  Secondary areas}}}}}}\\

{\footnotesize{~}}&
{\footnotesize{{{~~~~~~~~~~~\footnotesize{{ $\mathcal{S}^{(2)}:= \{C^{(2)}_1,..., C^{(2)}_{g'}\}$} $\%$ Set of secondary cliques   }}}}}\\


\hline\hline

\end{tabular}
\end{center}
\vspace{-0.6cm}
\end{table}

Placing a PRN within  $\textbf{W}_{d_s}(\mathcal{S}^{(1)},C)$ for a clique $C\in \mathbb{G}^{(2)}$ ensures a distance larger than $d_s$ to the PRNs placed in $\mathcal{M}(\mathcal{S}^{(1)}, C)$. Given the secondary LG, the primary cliques, and the MSD value, the \emph{secondary} MCC method  partitions $\mathbb{G}^{(2)}$ step by step into the minimum number of cliques $C^{(2)}_1, C^{(2)}_2,..., C^{(2)}_{g'}$, labeled as \emph{secondary cliques}, such that
 $\textbf{W}_{d_s}(\mathcal{S}^{(1)},C^{(2)}_{k})$, with $1 \leq k\leq g'$, are all non-empty. 
Then, we label them as \emph{secondary areas} $\textbf{A}^{(2)}_k:=\textbf{W}_{d_s}(\mathcal{S}^{(1)},C^{(2)}_{k})$ for $1\leq k\leq g'$. 
As given in Algorithm \ref{tabel:2-tier-Max_Clique_partitioning}, the secondary MCC method starts with sorting the nodes of $\mathbb{G}^{(2)}$ in an ascending degree order and adding the nodes to the first cluster $C^{(2)}_1$ as long as it forms a clique with a non-empty  $\textbf{W}_{d_s}(\mathcal{S}^{(1)},C^{(2)}_1)$. Then, we remove the maximal clique  from $\mathbb{G}^{(2)}$ and start over the algorithm until no node from the remaining graph left.

\begin{figure*}[!t]
\centering
\hspace{0.2cm}\includegraphics[width=16.2cm]{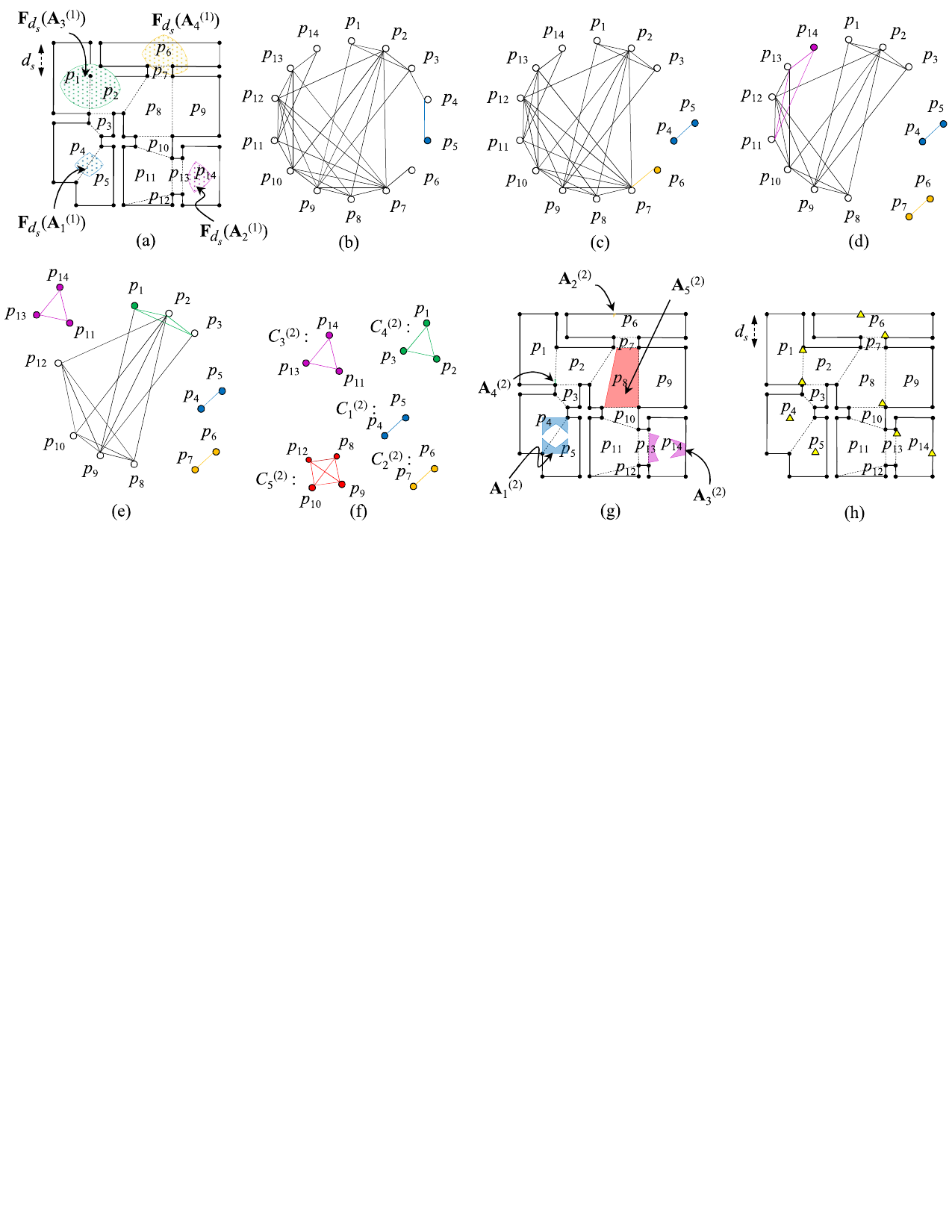}
    \vspace{-12.3cm}\caption{ Determining the minimum number of PRNs and their placement to satisfy 2-LoS  coverage with $r=\infty$ and the given MSD value $d_s$; a) Forbidden regions of the primary areas, b) the secondary LG, b-f) the steps of secondary MCC method, f) the resulting  $g'=5$ secondary cliques, g) the secondary areas, and h) the PRN deployment satisfying 2-LoS coverage.  }\label{fig:double-coverage-optimal}
\vspace{-0.2cm}
\end{figure*}

Figure \ref{fig:double-coverage-optimal} visualizes determining the minimum number of PRNs and their placement for 2-LoS coverage via the secondary MCC method with the given MSD value. Fig. \ref{fig:double-coverage-optimal}a) shows the forbidden regions of the four primary areas, and  Fig. \ref{fig:double-coverage-optimal}b) represents  $\mathbb{G}^{(2)}$ derived in Figure \ref{fig:edgeremoval}. The chain of Figs. \ref{fig:double-coverage-optimal}b-f) illustrates the steps of the secondary MCC method. In  Figs. \ref{fig:double-coverage-optimal}b), we select $p_5$ as the minimum degree node. Here,  the nodes  $p_4$ and $p_5$ form a maximal clique in $\mathbb{G}^{(2)}$ wherein both belong to the primary clique $C^{(1)}_1$, see Fig. \ref{fig:optimal-single-coverage}e). Thus, we form the maximal clique $ [p_5,p_4]$, labeled as the first secondary clique $C^{(2)}_1$, because we have $\mathcal{M}(\mathcal{S}^{(1)},C^{(2)}_1)=\{{\textbf{A}^{(1)}_1}\}$ and  $\textbf{W}_{d_s}(\mathcal{S}^{(1)},C^{(2)}_1)=\textbf{L}(C^{(2)}_1)- \textbf{F}_{d_s}(\textbf{A}^{(1)}_1)$ is non-empty. Taking similar steps, we have three secondary cliques in Fig. \ref{fig:double-coverage-optimal}e). Now, selecting $p_1$ as the minimum degree node, we form $C^{(2)}_4[p_1,p_2,p_3]$. Here, although the set of nodes $
[p_1,p_2,p_3,p_9]$ is also a clique in the remaining graph,  its well-spaced area is empty, so we avoid adding $p_9$ to $C^{(2)}_4$.  Finally, we end up with $g'=5$ secondary cliques $C^{(2)}_1, C^{(2)}_2,...,C^{(2)}_5$, as in Fig. \ref{fig:double-coverage-optimal}g), whose secondary areas $\textbf{A}^{(2)}_k:=\textbf{W}_{d_s}(\mathcal{S}^{(1)},C^{(2)}_{k})$ for $1\leq k\leq 5$ are shown in Fig. \ref{fig:double-coverage-optimal}h) with the same colors. Therefore, a total number of $g+g'=9$ PRNs is necessary and sufficient for 2-LoS coverage of the layout. Here, we place each PRN within the areas $\textbf{A}^{(1)}_1,..., \textbf{A}^{(1)}_4,  \textbf{A}^{(2)}_1,..., \textbf{A}^{(2)}_5$ as in Fig. \ref{fig:optimal-single-coverage}f) and Fig. \ref{fig:double-coverage-optimal}g),
such that the PRN associated to $\textbf{A}^{(2)}_k$ has the maximum distance to the PRNs placed within the areas in $\mathcal{M}(\mathcal{S}^{(1)}, C^{(2)}_k)$.
 It is because the PRNs associated with the areas  in $\mathcal{M}(\mathcal{S}^{(1)}, C^{(2)}_k)$  have partly the common areas to serve with the PRN for $\textbf{A}_k^{(2)}$. 
For instance, since  $\textbf{A}^{(1)}_1$ is the only element in  $\mathcal{M}(\mathcal{S}^{(1)}, C_1^{(2)})$, two PRNs are placed  on the corner points of $\textbf{A}^{(1)}_1$ and $\textbf{A}_1^{(2)}$, as in Fig. \ref{fig:double-coverage-optimal}h). Similarly since $\mathcal{M}(\mathcal{S}^{(1)}, C^{(2)}_5)=\{\textbf{A}^{(1)}_3, \textbf{A}^{(1)}_4\}$, we place the PRN inside $\textbf{A}^{(2)}_5$ on the corner point to have the maximum distance to the PRNs associated with $\textbf{A}^{(1)}_3$ and $ \textbf{A}^{(1)}_4$. Finally, the nine PRNs, placed as in Fig. \ref{fig:double-coverage-optimal}h), guarantee 2-LoS coverage of the sample layout.

It is worth mentioning that the minimum number of PRNs required for 2-LoS coverage is equal to or larger than two times the hidden set of points, i.e., $2t\leq g+g'$, see Lemma \ref{Lemma:lower-bound}. The equality may hold by setting small values for the MSD.

  \pagestyle{empty}

\begin{figure}[!t]
\centering
\hspace{0.2cm}\includegraphics[width=9cm]{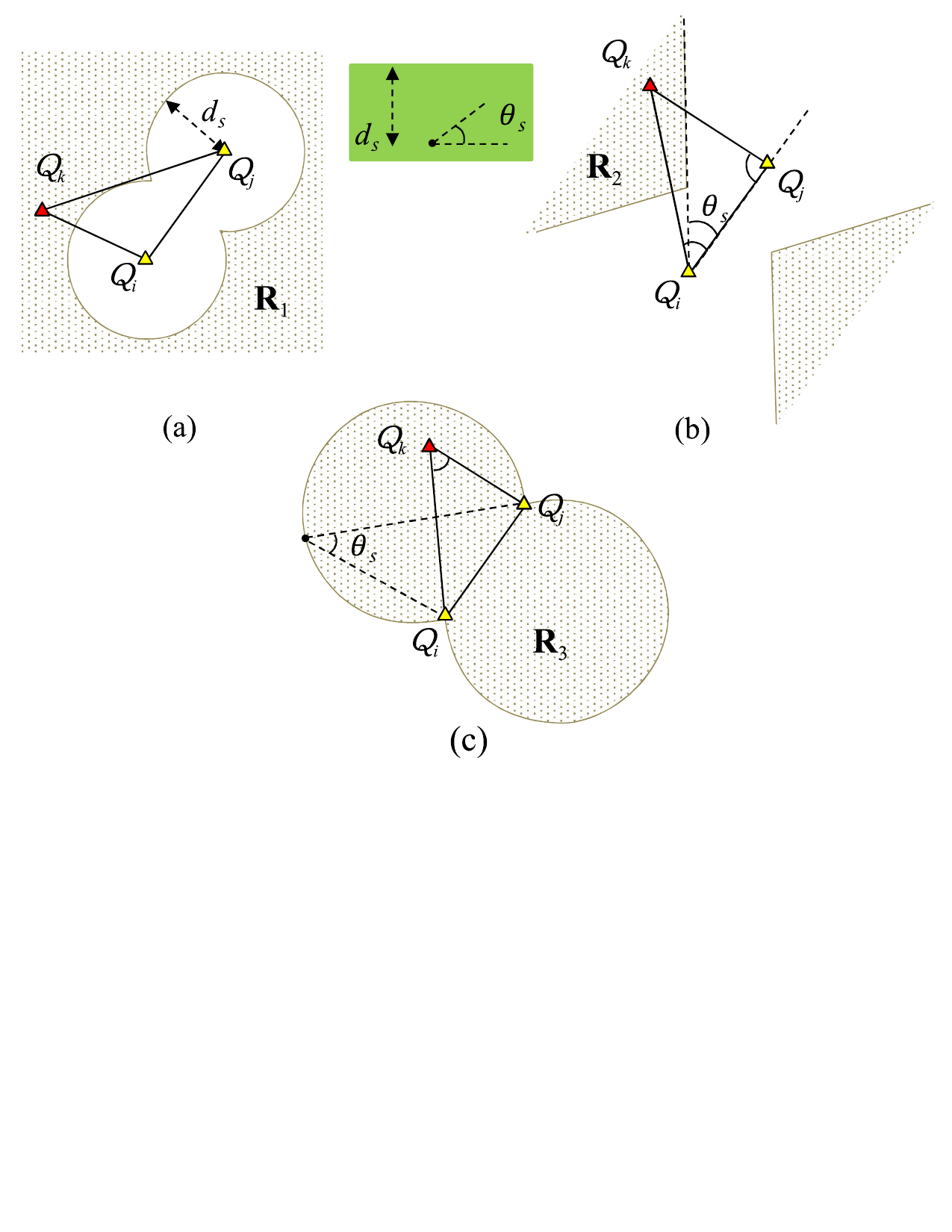}
    \vspace{-5.1cm}\caption{ Well-spaced area of the points $Q_i$ and $Q_j$ with respect to the given MSD and MSA values.}\label{fig:forbiden-triple}
\vspace{-0.5cm}
\end{figure}


\vspace{-1mm}
\subsection{PRN deployment to ensure 3-LoS coverage}
\vspace{-0.5mm}

Given MSD and MSA values, we  specify the minimum number of PRNs and their placement to ensure 3-LoS coverage, i.e., a UE anywhere in a layout has LoS conditions to at least a triplet of PRNs, see Definition \ref{well-separated.def}. 
An application requiring 3-LoS coverage is a ToA-based IPS using trilateration principle.
The deployment for 2-LoS coverage, such as the one in Fig. \ref{fig:double-coverage-optimal}h), is a starting point for 3-LoS coverage. We deploy a set of additional PRNs on top of the set already placed for 2-LoS coverage using the following definition.

\begin{table}[t]
\caption{ Trinary Maximal Clique Clustering}\label{tabel:trinary-Max_Clique_partitioning}
\vspace{-0.3cm}
\begin{center}
\hspace{0cm}\begin{tabular}{ l | l}\hline\hline

{\footnotesize{~}}&
{\footnotesize{{\footnotesize{\textbf{Inputs:}  ~~~ }}}} \\

{\footnotesize{~}}&
{\footnotesize{{\footnotesize{$\mathbb{G}^{(3)}[p'_1,p'_2,...,p'_{M'}]$ ~~$\%$ Definition \ref{def:trinary-graph} }}}} \\

{\footnotesize{~}}&
    {\footnotesize{{\footnotesize{$Q_1, Q_2,..., Q_{g+g'}$ ~~~~~$\%$ PRN locations for 2-LoS coverage  }}}} \\


\hline

{\footnotesize{1}}&
{\footnotesize{{\footnotesize{$k\longleftarrow 0$ }}}} \\

{\footnotesize{2}}&
{\footnotesize{{\footnotesize{{\bf{While}} ~~~$\mathbb{G}^{(3)}\neq \emptyset$: }}}} \\

{\footnotesize{3}}&{\footnotesize{{\footnotesize{~~~~$\mathbb{G}^{(3)}[q_1,q_2,..., q_{M'}]\longleftarrow$  $\texttt{Ascending\_Sort}(\mathbb{G}^{(3)})$  }}}} \\

{\footnotesize{4}}&
{\footnotesize{{\footnotesize{~~~~$k\longleftarrow k+1$}}}} \\

   {\footnotesize{5}}&
{\footnotesize{{\footnotesize{~~~~$C_k\longleftarrow \emptyset,~ \textbf{A}_k\longleftarrow \mathbb{R}^2$}}}} \\

 {\footnotesize{6}}&{\footnotesize{{\footnotesize{~~~~{\bf{For}}~ $i'$ ~{\bf{from}} $1$ {\bf{to}} $M'$: }}}} \\

 {\footnotesize{7}}&
{\footnotesize{{\footnotesize{~~~~~~~~{\bf{If}} ~~$\texttt{Is\_Clique}$ ($\mathbb{G}^{(3)}$, $C_k\cup q_{i'}$) ==$1$: }}}} \\

 {\footnotesize{8}}&{\footnotesize{{\footnotesize{~~~~~~~~~~~{\bf{For}}~ $i$ ~{\bf{from}} $1$ {\bf{to}} $g+g'$: }}}} \\

  {\footnotesize{9}}&{\footnotesize{{\footnotesize{~~~~~~~~~~~~~~{\bf{For}}~ $j$ ~{\bf{from}} $1$ {\bf{to}} $g+g'$: }}}} \\

{\footnotesize{10}}&
{\footnotesize{{\footnotesize{~~~~~~~~~~~~~~~~~{\bf{If}}~~  $Q_i,Q_j\in \textbf{L}(q_{i'}) ~\& ~ \|Q_iQ_j\| \geq d_s $:}}}}  \\

{\footnotesize{11}}&
{\footnotesize{{\footnotesize{~~~~~~~~~~~~~~~~~~~~{\bf{If}}~~  $ \mathbf{A}_k\cap \textbf{L}(q_{i'})\cap \textbf{W}_{d_s,\theta_s}(Q_i,Q_j)\neq \emptyset $}}}}  \\

{\footnotesize{12}} & {\footnotesize{~~~~~~~~~~~~~~~~~~~~~~~~$C_k\longleftarrow  C_k \cup q_{i'}$  }}\\ 


{\footnotesize{13}}&
{\footnotesize{{\footnotesize{~~~~~~~~~~~~~~~~~~~~~~~  $ \mathbf{A}_k\longleftarrow \mathbf{A}_k\cap \textbf{L}(q_{i'})\cap \textbf{W}_{d_s,\theta_s}(Q_i,Q_j)$}}}}  \\

{\footnotesize{14}}&
{\footnotesize{{\footnotesize{~~~~~~~~~~~~~~~~~~~~~~~ {\bf{Break}}~~~~$\%$ Return to the first {\bf{For}} loop }}}}    \\


{\footnotesize{16}}&
{\footnotesize{{\footnotesize{~~~~$\mathbb{G}^{(3)}\longleftarrow \mathbb{G}^{(3)}-C_k~~~\%$ $C_k$: The maximal clique }}}} \\
{\footnotesize{17}}&
{\footnotesize{{\footnotesize{~~~~$M'\longleftarrow |\mathbb{G}^{(3)}|$, $C^{(3)}_k:= C_k$, $\textbf{A}^{(3)}_k:= \textbf{A}_k$}}}} \\




{\footnotesize{18}}&
{\footnotesize{{{\footnotesize{{\bf{Return}}   }}}}}\\

\hline

{\footnotesize{~}}&
{\footnotesize{{{\footnotesize{{\bf{Output:}~~ }   }}}}}\\

{\footnotesize{~}}&
{\footnotesize{{{\footnotesize{{ $\mathcal{S}^{(3)}:= \{C^{(3)}_1, C^{(3)}_2,..., C^{(3)}_{g''}\}$} ~~$\%$ The set of trinary cliques  }}}}}\\

{\footnotesize{~}}&
{\footnotesize{{{\footnotesize{{ $\textbf{A}^{(3)}_1, \textbf{A}^{(3)}_2,..., \textbf{A}^{(3)}_{g''}~~~~\%$  Trinary areas}}}}}}\\
\hline \hline

\end{tabular}
\end{center}
\vspace{-0.5cm}
\end{table}

\begin{definition} 
\label{def:well-spaced-area-two-points}$\hspace{-0.1cm}{\bf{Well\hspace{-0.1cm}-\hspace{-0.1cm}spaced~area}}~\textbf{W}_{d_s,\theta_s}(Q_i,Q_j)$ refers to the locus $Q_k$ that forms a triplet with a twin PRNs located at   $Q_i$ and $Q_j$.
\end{definition}
In other words,  $\textbf{W}_{d_s,\theta_s}(Q_i,Q_j)$ is   the locus of $Q_k$ such that $d_s\leq \|Q_iQ_k\|,\|Q_jQ_k\|$ and $\theta_s\leq\hat{Q}_i, \hat{Q}_j, \hat{Q}_k$ for the triangle $q=:\triangle Q_iQ_jQ_k$. Figure \ref{fig:forbiden-triple} illustrates the well-spaced area of two given points $Q_i$ and $Q_j$. Selecting $Q_k$ within $\mathcal{R}_1$, ensures the condition  $d_s\leq \|Q_iQ_k\|,\|Q_jQ_k\|$, where $\mathcal{R}_1$ denotes the exterior of the circles  with the center points $Q_i$ and $Q_j$ and radius $d_s$, see  Fig. \ref{fig:forbiden-triple}a). Besides, denoting the shaded region in Fig. \ref{fig:forbiden-triple}b) by $\mathcal{R}_2$,  choosing $Q_k$ within $\mathcal{R}_2$ ensures $\theta_s\leq\hat{Q}_i, \hat{Q}_j$. As in Fig. \ref{fig:forbiden-triple}c), we draw two circles that take the segment $Q_iQ_j$ as a chord with radius $\|Q_iQ_j\|/ 2\sin{\theta_s}$, where $\mathcal{R}_3$ denotes the inner space. Therefore, falling $\hat{Q}_k$ inside $\mathcal{R}_3$ ensures $\theta_s\leq \hat{Q}_k$. Finally, in Figure  \ref{fig:forbiden-triple}, we have 
$\textbf{W}_{d_s,\theta_s}(Q_i,Q_j):=\mathcal{R}_1 \cap \mathcal{R}_2 \cap \mathcal{R}_3$.


\begin{figure*}[!t]
\centering
\hspace{0.2cm}\includegraphics[width=16.5cm]{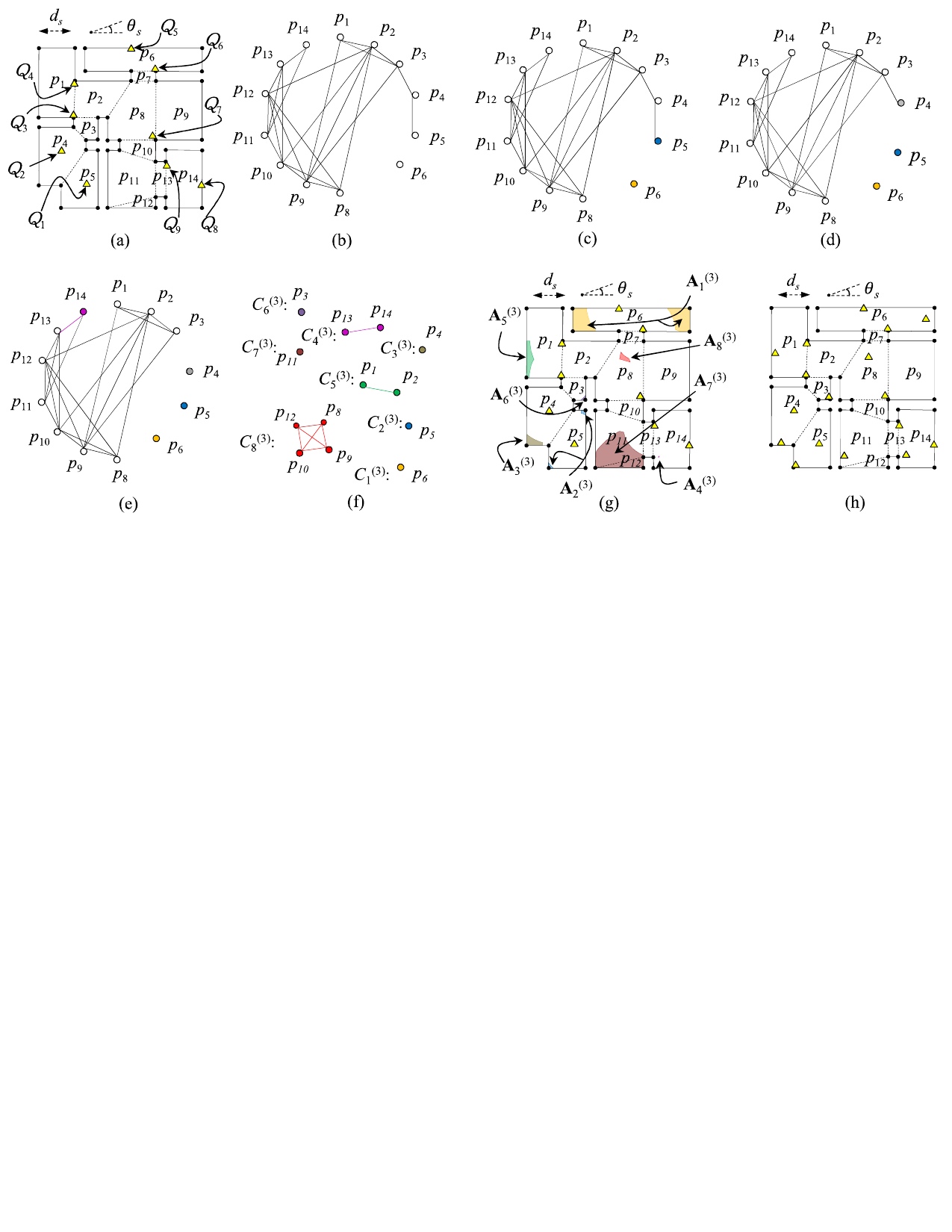}
    \vspace{-12.6cm}\caption{ Determining the minimum number of PRNs and their placement to achieve  3-LoS coverage with $r=\infty$ and the given MSD  and  MSA values; a)  The deployment of Fig. \ref{fig:double-coverage-optimal}h, b) the trinary LG, c-f) the steps of the trinary MCC method, f) the resulting  $g''=8$ trinary cliques, g) the trinary areas, and h) the PRN deployment.  }\label{fig:triple-coverage-optimal}
\vspace{-0.1cm}
\end{figure*}

\begin{definition}\label{def:trinary-graph}$\hspace{-0.1cm}{\bf{Trinary~LoS~Graph~(LG)}}$, denoted by $\mathbb{G}^{(3)}$,  is derived by  removing the nodes in the secondary LG whose LoS area contains at least a triplet of PRNs already placed for 2-LoS coverage.
\end{definition}

By creating the trinary LG, we  increase the chance that the resulting number of cliques in the clique clustering process is the minimum.
Figure \ref{fig:triple-coverage-optimal} depicts the steps of the trinary MCC method to determine the minimum number of PRNs and their placement for a 3-LoS coverage. 
Fig. \ref{fig:triple-coverage-optimal}a) displays the set of PRNs already placed for 2-LoS coverage as in Fig. \ref{fig:double-coverage-optimal}h) while  Fig. \ref{fig:triple-coverage-optimal}b) represents $\mathbb{G}^{(3)}$. 
Here, since the PRNs located at $Q_5$, $Q_6$, and $Q_7$ form a triplet within $\textbf{L}(p_7)$, node $p_7$ is removed. 
To find the minimum number of cliques that partition $\mathbb{G}^{(3)}$, Figs. \ref{fig:triple-coverage-optimal}c-f) illustrates the steps of the trinary MCC method as in Algorithm \ref{tabel:trinary-Max_Clique_partitioning}. The single node $p_6$ is solely clustered as a maximal clique, labeled as the first trinary clique $C_1^{(3)}[p_6]$. Here, since points $Q_5$ and $Q_6$ fall within $\textbf{L}(p_6)$, the first trinary area is derived as $\textbf{A}_1^{(3)}:=\textbf{L}(C_1^{(3)})\cap \textbf{W}_{d_s,\theta_s}(Q_5,Q_6)$. We cluster the next minimum degree node $p_5$, solely as the second trinary clique $C_2^{(3)}[p_5]$, see Fig. \ref{fig:triple-coverage-optimal}c). Here, although $p_5$ can form a clique with $p_4$ containing the twin PRNs located at $Q_1$ and $Q_2$ in their LoS area, we avoid adding $p_4$ to $C_2^{(3)}$ because $\textbf{L}(p_5)\cap \textbf{L}(p_4)\cap\textbf{W}_{d_s,\theta_s}(Q_1,Q_2)=\emptyset$.  Continuing this procedure, we end up with $g''=8$ trinary cliques in Fig. \ref{fig:triple-coverage-optimal}f)  whose trinary areas are displayed in Fig. \ref{fig:triple-coverage-optimal}g).  For instance, the twin PRNs located at $Q_4$ and $Q_7$ fall inside $\textbf{L}(p_9)$, while the twin PRNs located at $Q_6$ and $Q_7$ lie inside $\textbf{L}(p_8)\cap \textbf{L}(p_{10})\cap \textbf{L}(p_{12})$. Therefore, forming the last trinary clique $C_8^{(3)}[p_8,p_9,p_{10},p_{12}]$, the last trinary area is defined as $\textbf{A}_8^{(3)}=\textbf{L}(C_8^{(3)})\cap \textbf{W}_{d_s,\theta_s}(Q_6,Q_7)\cap  \textbf{W}_{d_s,\theta_s}(Q_4,Q_7)$. Finally, placing a set of additional PRNs, one anywhere inside  each trinary area, results in the total number of $g+g'+g''=17$ PRNs, ensuring the 3-LoS coverage of the given layout. 

Notably, the minimum number of PRNs required for 3-LoS coverage is lower-bounded by three times the  hidden set of points, i.e., $3t\leq g+g'+g''$, see Lemma \ref{Lemma:lower-bound}. The equality may hold by setting small values for the MSD and MSA values.

\vspace{0mm}
\section{Simulation results}
\label{sec:4}
\vspace{-0mm}

We evaluate the capability of the proposed algorithms  to satisfy 1-LoS, 2-LoS, and 3-LoS coverage in the $22m \times 22m$ sample layout.  
For  $r=\infty$, $r=6m$, and $r=3m$,  we partition the layout by  $\mathcal{HT}(R=3m), \mathcal{HT}(R=2m)$, and $\mathcal{HT}(R=1.2m)$, resulting in $M=387, M=762$, and $M=1919$ triangles, respectively,   representing the number of nodes in the primary LG.

\begin{figure*}[!t]
\advance\leftskip+2.8cm
\includegraphics[width=13.1cm]{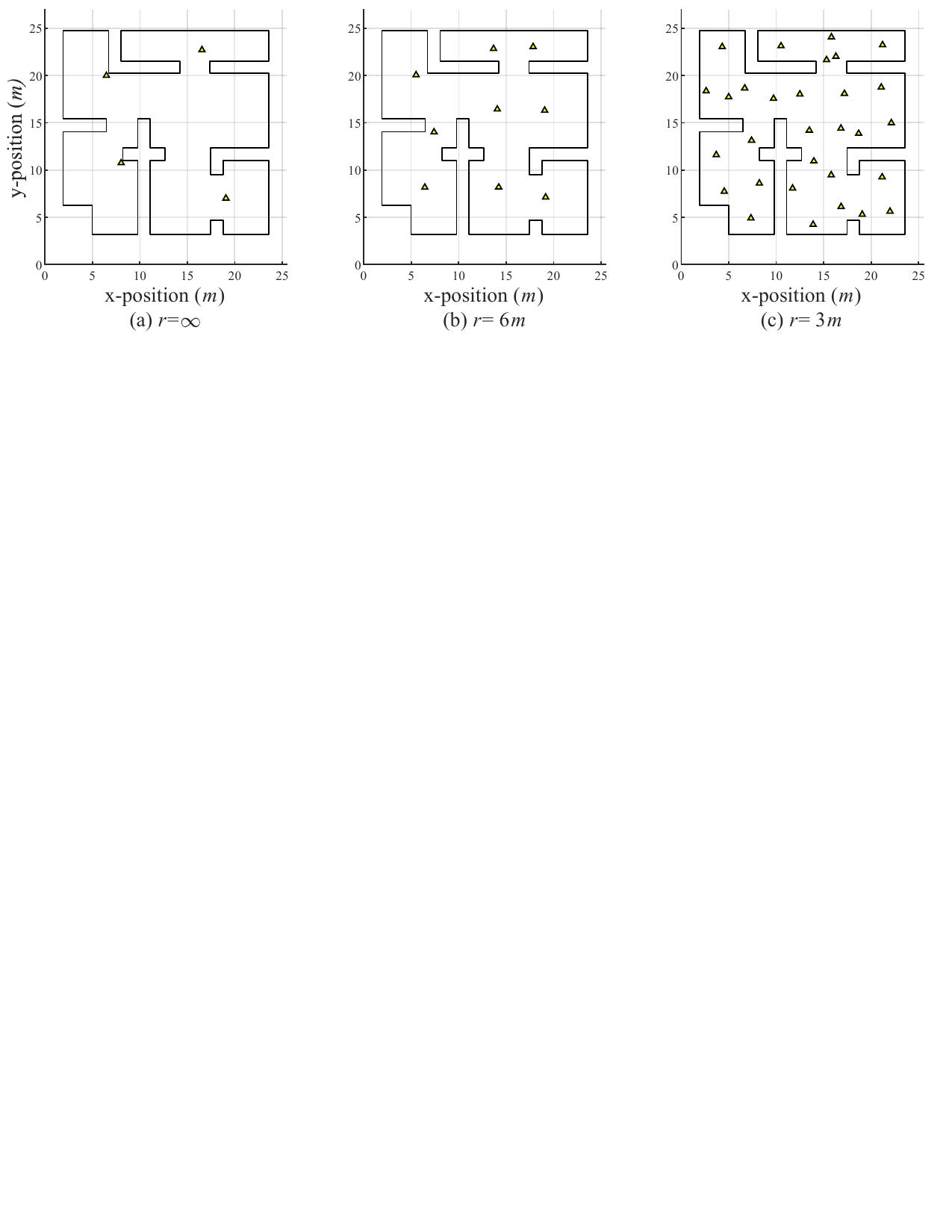}
    \vspace{-12.7cm}\caption{ Minimum number of PRNs and their placements for 1-LoS coverage}\label{fig:single-coverage}\vspace{-0.1cm}
\end{figure*}

  Figure \ref{fig:single-coverage} shows the minimum number of PRNs and their locations to achieve 1-LoS coverage using the primary MCC method.
Based on the representation in Fig. \ref{fig:single-coverage}a), placing only $4$ PRNs ensures that any UE located anywhere within the  layout has LoS condition to least one PRN with an unlimited maximum range.
By setting $r = 6$ and $3m$ as in Fig. \ref{fig:single-coverage}b) and c),
 we achieve 1-LoS coverage through 9 and 30 PRNs, respectively.
It is observed that a smaller range requires a higher number of PRNs to achieve the same performance. 

 \begin{figure*}[!t]
\advance\leftskip+2.5cm
\includegraphics[width=13.2cm]{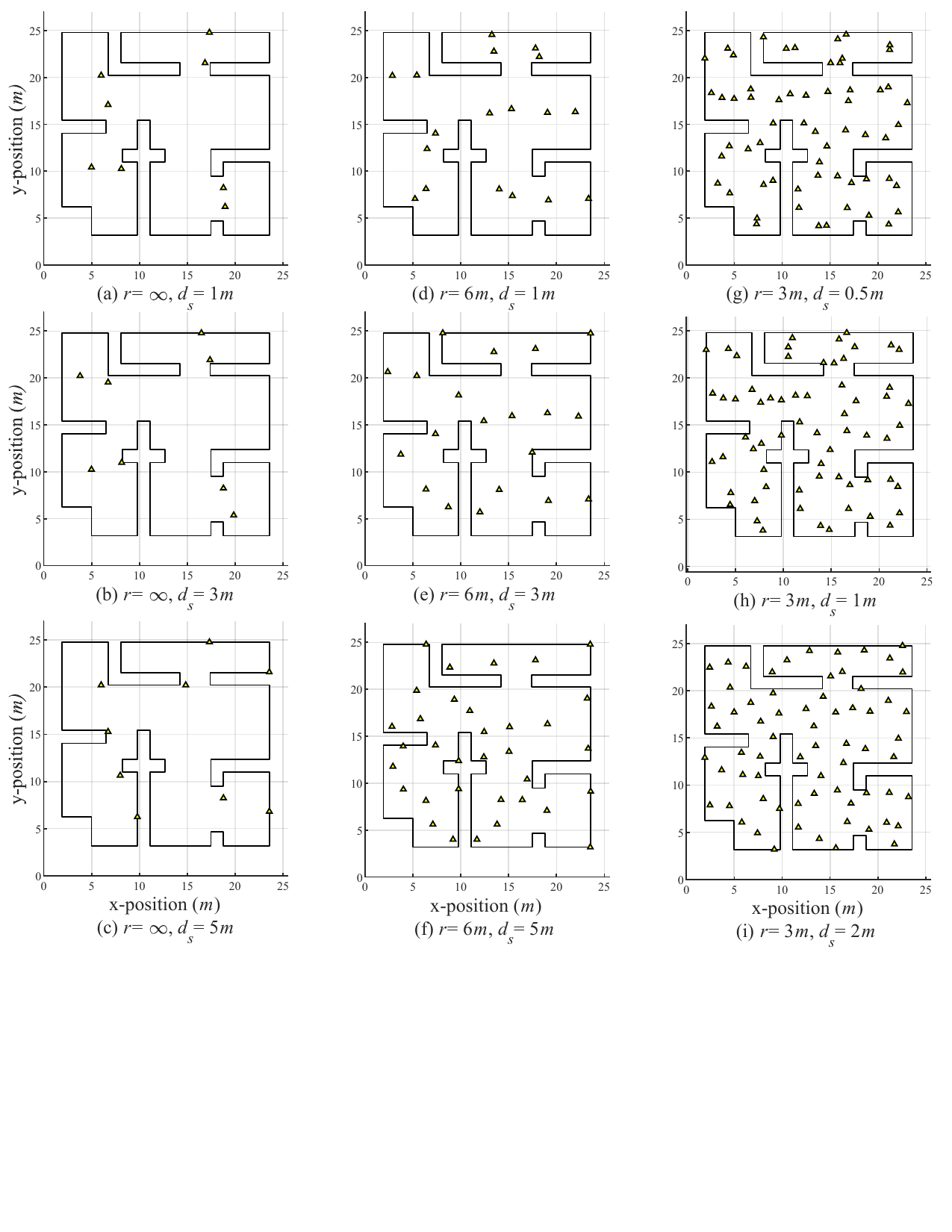}
    \vspace{-4.3cm}\caption{ Minimum number of PRNs and their placements for 2-LoS coverage. }\label{fig:double-coverage}
\vspace{-0.3cm}
\end{figure*}

 \begin{figure*}[!t]
\advance\leftskip+2.5cm
\includegraphics[width=13.1cm]{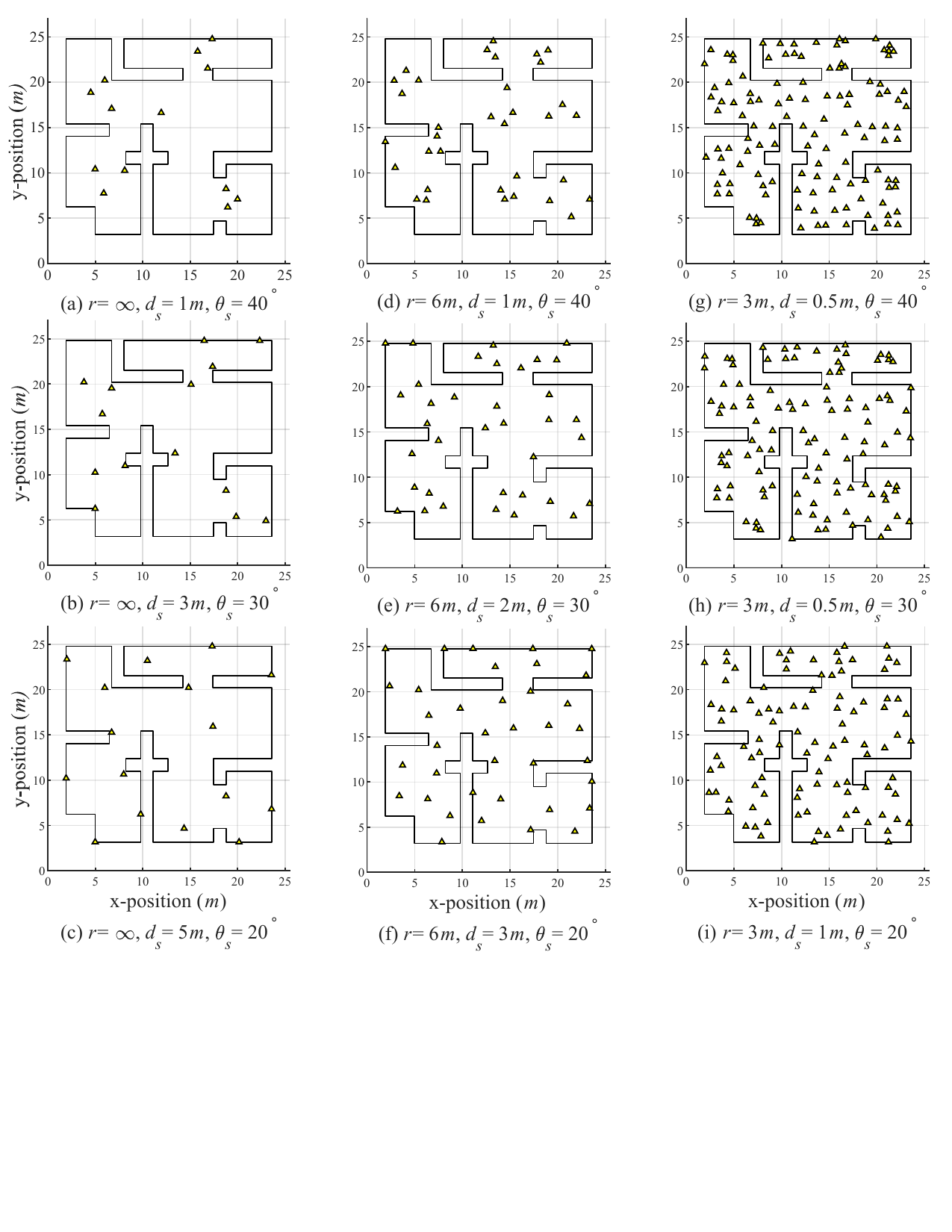}
    \vspace{-4.1cm}\caption{  Minimum number of PRNs and their placements for 3-LoS coverage. }\label{fig:triple-coverage}
\vspace{-0.3cm}
\end{figure*}

Using the secondary MCC method, Figure \ref{fig:double-coverage} illustrates the minimum number of PRNs and their placement, ensuring 2-LoS coverage.
 Setting $r=\infty$, Figs. \ref{fig:double-coverage}a-c) indicate the resulting $8$, $8$, and  $9$ PRNs for MSD values $d_s=1, 3$, and $5m$, respectively. 
 It is noted, for example, in Fig.  \ref{fig:double-coverage}c) that any UE within the layout has LoS to at least two PRNs located  at least five meters from each other.
 When $r=6m$ and using the same MSD values, Figs. \ref{fig:double-coverage}d-f) display the deployment of the resulting $18$, $20$, and $34$ PRNs.
Notably, in Fig. \ref{fig:double-coverage}f),  even though there are many pairs of PRNs closer than five meters, a UE anywhere in the layout falls within the LoS of at least a twin PRNs with distance of larger than $d_s=5m$.
 A similar pattern is observed in Fig. \ref{fig:double-coverage}g-i) that resulted in $60$, $64$, and $67$ PRNs, when $r=3m$ and the MSD values $d_s=0.5, 1$, and $2m$, respectively.
A higher MSD value may dramatically increase the number of PRNs required for 2-LoS coverage.

The placement of the minimum number of PRNs needed for 3-LoS coverage is illustrated in Fig. \ref{fig:triple-coverage} using the trinary MCC method. 
 Here, we set an MSA value in addition to the MSD. When $r=\infty$,  Figs.  \ref{fig:triple-coverage}a-c) show the resulting $13$, $14$, and $16$ PRNs and their locations when $(d_s, \theta_s)=(1m, 40^\circ)$, $(3m, 30^\circ)$, and $(5m, 20^\circ)$, respectively. As can be seen, a UE anywhere in the layout has  LoS conditions to at least a triplet of PRNs. By setting $r=6m$ and changing the MSD values to $d_s=1, 2$, and $3m$,  Figs. \ref{fig:triple-coverage}d-f) indicate the resulting $34$, $36$, and $37$ PRNs. Similarly,  Figs. \ref{fig:triple-coverage}g-i) depict the deployment of $122, 110$, and $99$ PRNs ensuring 3-LoS coverage when $r=3m$ and the MSD values $d_s=0.5, 0.5$, and $1m$, respectively. As imposed by MSD and MSA values, the three PRNs closest to a UE are not necessarily the triplet of PRNs serving the UE.

\begin{figure}[!t]
\centering
\advance\leftskip-1cm
\advance\rightskip-1cm
\includegraphics[width=10.5cm]{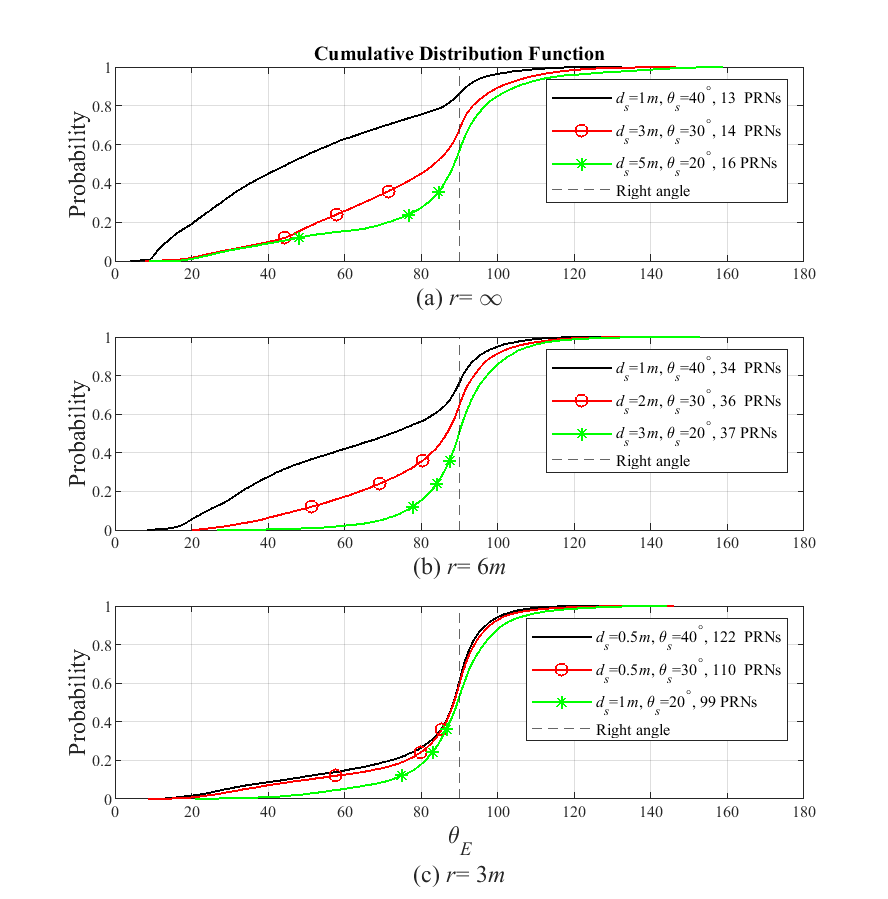} 
    \vspace{-0.4cm}\caption{ Comparison of EVA cumulative distribution for a) the PRN deployments in Figs. \ref{fig:triple-coverage}a-c), b) the PRN deployments in  Figs. \ref{fig:triple-coverage}d-f), and c) the PRN deployments in  Figs. \ref{fig:triple-coverage}g-i).}\label{fig:CDFs-triple-coverage}
\vspace{-0.2cm}
\end{figure}

 As a performance metric for precision of positioning, we evaluate the PRN deployments in Figure \ref{fig:triple-coverage} by calculating the EVA for UEs distributed uniformly within the sample layout.  
 Fig. \ref{fig:CDFs-triple-coverage}a)  compares the Cumulative Distribution Function CDF of EVAs for the PRNs placed as in Figs. \ref{fig:triple-coverage}a-c).
It is observed in Fig. \ref{fig:CDFs-triple-coverage}a) that deploying $13$ PRNs as in  Fig. \ref{fig:triple-coverage}a) results in only $40\%$ of the layout area having an EVA within the interval $|90^\circ-\theta_E|<30^\circ$, while the deployment of $16$ PRNs as in Fig. \ref{fig:triple-coverage}c) yields  $80\%$ of the layout within the interval $|90^\circ-\theta_E|<30^\circ$. Similarly, Fig. \ref{fig:CDFs-triple-coverage}b) shows that deploying $37$ PRNs, as in Fig. \ref{fig:triple-coverage}f), provides the EVA satisfying  $|90^\circ-\theta_E|<30^\circ$ for more than $95\%$ of the layout area.
Finally, Fig. \ref{fig:CDFs-triple-coverage}c) shows that with $r=3m$, the deployment of only $99$ PRNs gives even  better EVA distribution than $110$ and $122$ PRNs.

\color{black}{}

\pagestyle{empty}

\vspace{0mm}
\section{Conclusion}
\label{sec:5}

Line-of-sight visibility is essential for accurate indoor positioning.
We proposed three deployment algorithms to determine the minimum number of reference nodes and their precise placement to guarantee LoS visibility regardless of the UE location in the indoor service area. To do this, the floor plan was first modeled by a graph; then, the graph was partitioned into a minimum number of cliques whose LoS visibility area determined the locations of the reference nodes. 
 To improve the accuracy of the positioning methods based on triangulation (AoA/AoD) and trilateration (ToA/TDOA/RTT), we introduced two design parameters, i.e., a minimum separation distance and a minimum separation angle between reference nodes, to identify the most convenient placement of them. 
Furthermore, we considered different coverage ranges for reference nodes, which become relevant when moving from mmWave (indoor range-unconstrained) into THz and optical wireless (indoor range-constrained) related problems. As expected, we observed that the number of reference nodes required to guarantee LoS visibility grows fast when the range of the positioning nodes becomes shorter. We also observed that increasing the minimum separation distance between the reference nodes improves the positioning accuracy for large ranges.  In contrast, as the range decreases,  the minimum separation angle becomes more critical regarding the positioning accuracy and the number of required reference nodes.





 

\pagestyle{empty}

  \section*{APPENDIX A}
\textit{Proof for Theorem \ref{Theorem:effective-angle}}:
The first assumption is that $U$ falls anywhere within the interior of $q$ and therefore we have $\theta_{ij}+\theta_{ik}+\theta_{jk}=360^\circ$ with non-reflex visibility angles $\hat{Q}_k\leq \theta_{ij}\leq 180^\circ$, $\hat{Q}_j\leq \theta_{ik}\leq 180^\circ$, and $\hat{Q}_i\leq \theta_{jk}\leq 180^\circ$,  
see Fig. \ref{fig:angle_define}a).  Let $\theta_{jk}$ be the smallest visibility angle. So, we have $\theta_{jk}\leq 120^\circ$ alongside  $180^\circ\leq\theta_{jk}+\theta_{ij}$ and $180^\circ\leq\theta_{jk}+\theta_{ik}$. Then, following a brief analysis, it can be seen that $\theta_{jk}$ is the visibility angle closest to the right angle and as a consequence, $\theta_E=\theta_{jk}$.
 Accordingly,
\begin{equation}
\theta_{s}\leq \hat{Q}_i\leq\theta_{jk}=\theta_E \leq 120^\circ. 
\label{eq:appendix0}    
\end{equation}Then, from the feasibility interval $0^\circ \leq \theta_s \leq 60^\circ$ and  (\ref{eq:appendix0}), we conclude (\ref{eq1}).

The second assumption is that $U$ lies outside $q$.
The exterior is divided into four regions in Fig. \ref{fig:cases1} by sketching the three sidelong circles and the circumscribed circle in black and red, respectively. Sidelong circles take the sides of a polygon as diameters. We determine the bounds for EVA  at each region.

\begin{itemize}
    \item Region I, shaded in Fig. \ref{fig:cases1}a), represents the area lying outside the sidelong circles. Let $U$ be located within this region. Then, all the  visibility angles are acute and therefore $\theta_E=\max(\theta_{ij},\theta_{ik}, \theta_{jk})\leq90^\circ$.
     We define $\theta_l$  as the angle between two tangent lines through $U$ to the inscribed circle of $q$.
     $\theta_l$ represents a lower bound to $\theta_E$, see Fig. \ref{fig:incircle1}a). 
    This fact motivates to establish a lower bound to $\theta_l$ in terms of $d_s$ and $\theta_s$.
    As in Fig. \ref{fig:incircle1}b), we consider the right triangle $UO_0H$, where $O_0$ is the center point of the inscribed circle with radius $r_0$ and $H$ denotes one of two tangency points from $U$. We have 
  \begin{equation}
\tan{(\dfrac{\theta_l}{2})}=\tan{( \widehat{ HUO_0})}=\dfrac{\|O_0H\|}{\|UH\|}\geq\dfrac{r_0}{r},
\label{eq:appendix1}    
\end{equation}
where the inequality is derived from  the facts $\|O_0H\|=r_0$ and $\|UH\|\leq r$. The latter is because the UE is located within the maximum range of the  PRNs and $H$ falls inside $q$. Here, we also used the fact that the segment $UO_o$ bisects $\theta_l$. Without loss of generality, let $\hat{Q}_k$ be the smallest inner angle of $q$. To find a lower-bound to $r_0$, we consider the right triangle $Q_kO_0L$, where $L$ denotes the tangency point of the side $Q_iQ_k$. Since the PRNs form a triplet, we have $\theta_s\leq \hat{Q}_k$ and $d_s\leq \|Q_iQ_k\|$. Thus,
\begin{equation}
\tan{(\dfrac{\theta_s}{2})}\leq\tan{( \widehat{LQ_kO_0})}=\dfrac{\|O_0L\|}{\|Q_kL\|}\leq\dfrac{r_0}{d_s/2},
\label{eq:appendix2}    
\end{equation}
as the tangent is an increasing function, $\|O_0L\|=r_0$, and $d_s/2\leq \|Q_kL\|$. The last term is because $\hat{Q}_k$  is the smallest angle of $q$ and therefore the segment $Q_kL$
 has a longer side portion than $Q_iL$ in  $Q_iQ_k$.
 Then, from (\ref{eq:appendix1}), (\ref{eq:appendix2}), and  $\theta_l\leq\theta_E\leq 90^\circ$, the EVA of $U$ located anywhere inside region I is confined to
    \begin{equation}
2\times\arctan{\big(\dfrac{d_s}{2r} \tan{(\dfrac{\theta_s}{2})}\big)}\leq \theta_E\leq 90^\circ.
\label{eq:appendix3}    
\end{equation}

\item 
Region II, shaded in Fig. \ref{fig:cases1}b), represents the locus that lie inside only one of the sidelong circles. Without loss of generality, let $U$ fall within the sidelong circle of $Q_iQ_k$ and therefore we have $90^\circ\leq\theta_{ik}=\theta_{ij}+\theta_{jk}\leq 180^\circ$ with $\theta_{ij},\theta_{jk}\leq 90^\circ$. Then, we have either $\theta_E=\max(\theta_{ij},\theta_{jk})$ or $\theta_E=\theta_{ik}$ depending on which one is closest to $90^\circ$. So, a brief analysis indicates that the EVA is restricted to 
  \begin{equation}
60^\circ\leq\theta_E \leq 120^\circ, 
\label{eq:appendix4}    
\end{equation}
for a UE anywhere inside region II.

\begin{figure}[!t]
\centering
\hspace{0cm}\includegraphics[width=7.5cm]{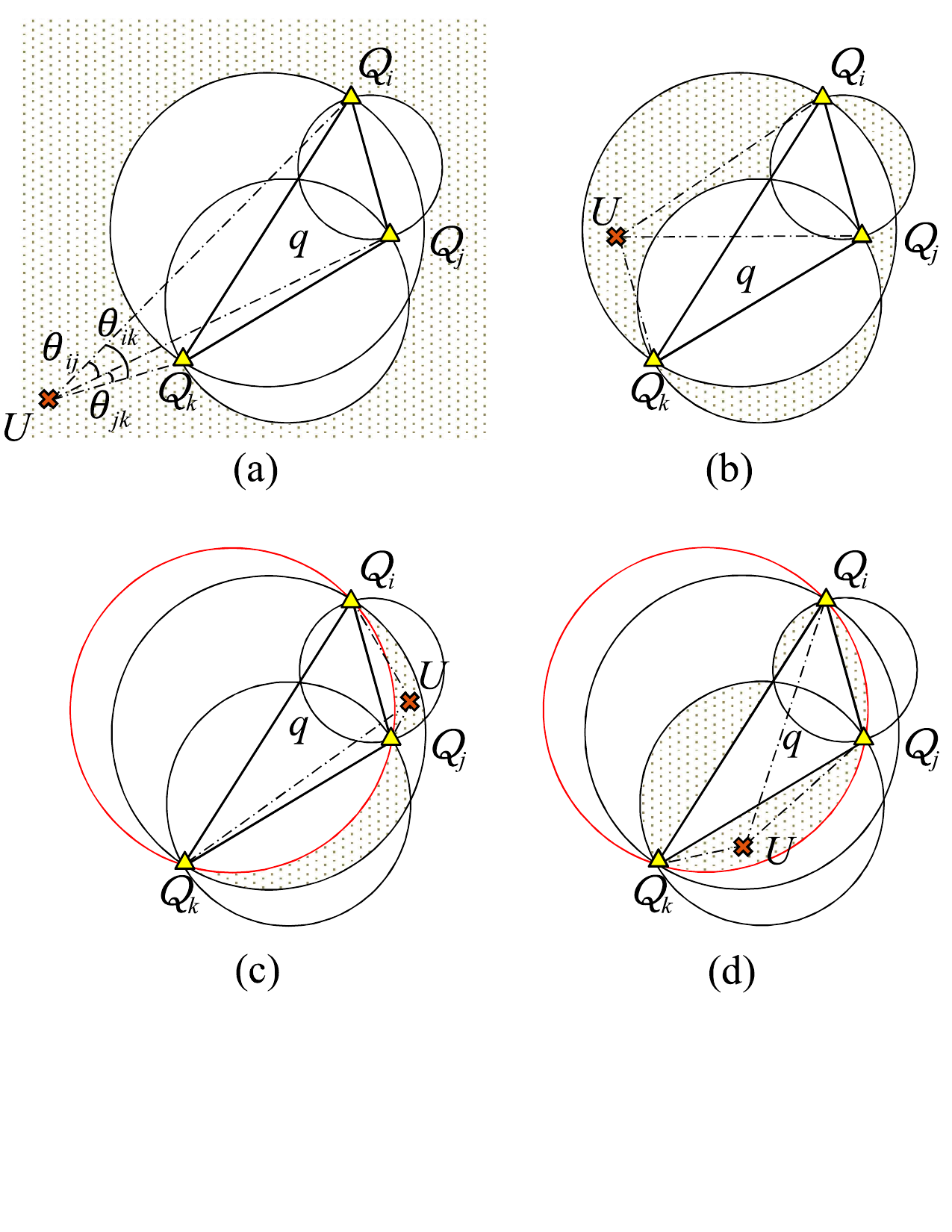}
    \vspace{-2.0cm}\caption{  A UE lying on the exterior of the triangle $q$}\label{fig:cases1}
\vspace{-0.3 cm}
\end{figure}

\item Region III, the shaded area of Fig. \ref{fig:cases1}c) represents the area lying within exactly two sidelong circles and outside the circumscribed circle.
 Without loss of generality, let $U$ be within the sidelong circles of $Q_iQ_j$ and $Q_iQ_k$. Then, we have $\theta_{ik}+\theta_{jk}=\theta_{ij}$ with $90^\circ \leq \theta_{ik}, \theta_{ij}\leq 180^\circ$. A brief analysis here shows that $\theta_E=\theta_{ik}$. Besides, since $U$ lies  outside the circumscribed circle, we have $\theta_{ik}\leq \hat{Q}_j\leq 180^\circ-2\theta_s$, wherein the right inequality shows the maximum inner angle of $q$.  So, the  EVA is limited to
\begin{equation}
90^\circ\leq\theta_E \leq 180^\circ-2\theta_s, 
\label{eq:appendix5}    
\end{equation}
for a UE lying inside region III.

\item  Region IV, shaded in Fig. \ref{fig:cases1}d),  indicates the area enclosed by exactly two sidelong circles and the interior of the circumscribed circle.
 Let $U$ be inside the sidelong circles $Q_jQ_k$ and $Q_iQ_k$. Then, we have  $90^\circ\leq\theta_{ij}+\theta_{ik}=\theta_{jk}\leq 180^\circ$ with $\theta_{ij}\leq90^\circ \leq \theta_{ik}$. So, the EVA here can be either $\theta_E=\theta_{ij}$ or $\theta_E=\theta_{ik}$. 
 Since $U$ falls inside the circumscribed circle the PRNs from a triplet, we have $\theta_s\leq \hat{Q}_k\leq\theta_{ij}$ and therefore  $90^\circ\leq\theta_{ik}\leq 180-\theta_s$. Thus, a brief analysis shows that the EVA is bounded   by
\begin{equation}
|90^\circ-\theta_E|\leq |90^\circ-\theta_s|, 
\label{eq:appendix6}    
\end{equation}
for a UE falling inside region IV. 
\end{itemize}
 The points inside all three sidelong circles fall inside $q$. So, we skip this case.
Finally, from the feasible intervals of MSD and MSA values, the union of (\ref{eq:appendix3})-(\ref{eq:appendix6}) can be formed as 
(\ref{eq:effective_LoS_angle_outside}).

\begin{figure}[!t]
\centering
\hspace{0cm}\includegraphics[width=8.1cm]{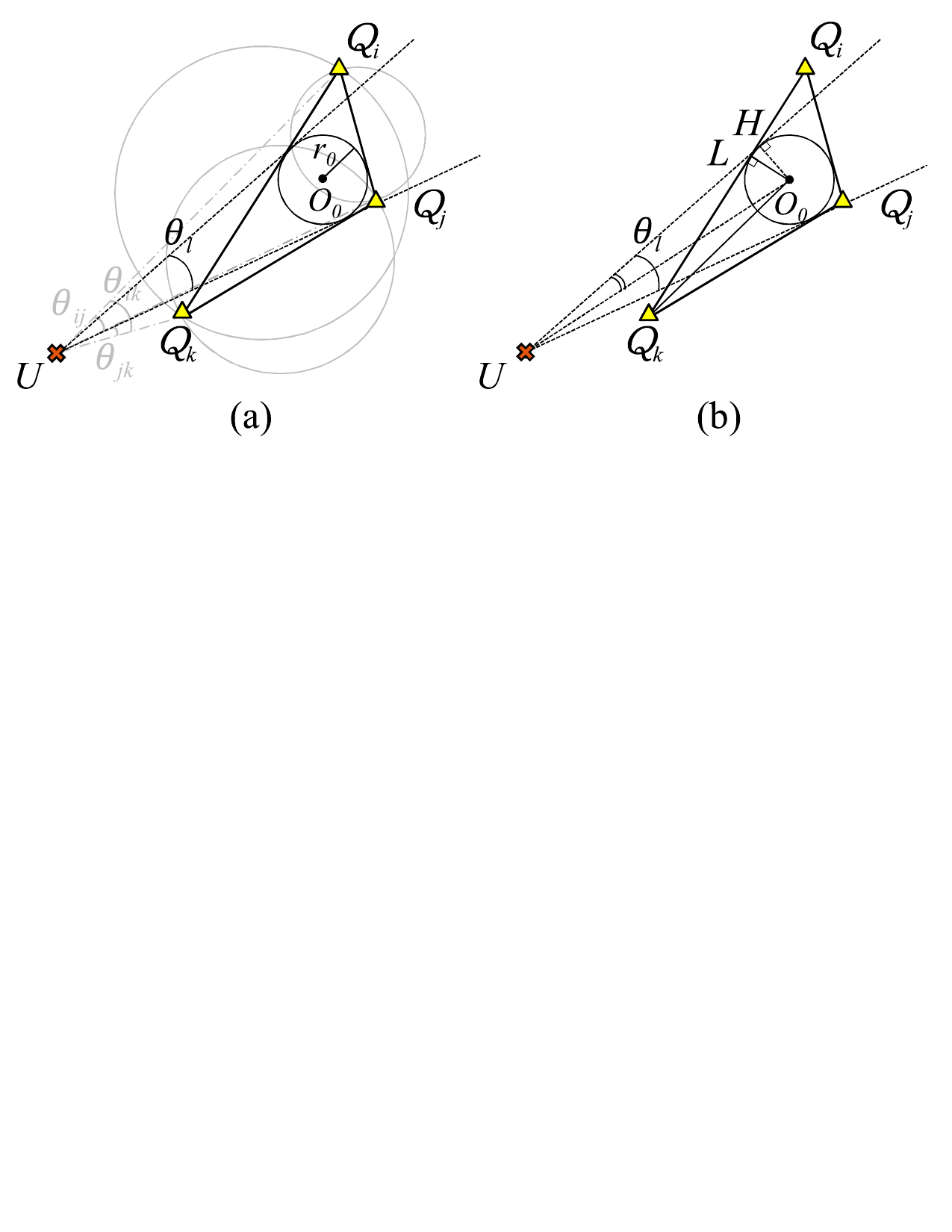}
    \vspace{-7.1cm}\caption{  A UE lying on the exterior of $q$ and the sidelong circles}\label{fig:incircle1}
\vspace{-0.3 cm}
\end{figure}



\vspace{0mm}

\bibliography{main}

\begin{thebibliography}{10}

\bibitem{pera2018}
J.~del Peral-Rosado, R.~Raulefs, J.~López-Salcedo, and G.~Seco-Granados,
  ``Survey of cellular mobile radio localization methods: {F}rom {1G} to
  {5G},'' {\em IEEE Commun. Surveys \& Tutorials}, vol.~20, pp.~1124--1148,
  Dec. 2017.

\bibitem{raza2018}
S.~{Modarres Razavi}, F.~Gunnarsson, H.~Rydén, A.~Busin, X.~Lin, X.~Zhang,
  S.~Dwivedi, I.~Siomina, and R.~Shreevastav, ``Positioning in cellular
  networks: {P}ast, present, future,'' in {\em Proc. IEEE Wireless Commun. and
  Networking Conf.}, pp.~1--6, Apr. 2018.

\bibitem{erik2021}
E.~Dahlman, S.~Parkvall, and J.~Sk\"old, {\em 5G NR: The Next Generation
  Wireless Access Technology}.
\newblock USA: Academic Press, Inc., 2nd~ed., 2021.

\bibitem{Tao2021}
T.~Jiang, J.~Zhang, P.~Tang, L.~Tian, Y.~Zheng, J.~Dou, H.~Asplund,
  L.~Raschkowski, R.~D’Errico, and T.~Jämsä, ``{3GPP} standardized {5G}
  channel model for {IIoT} scenarios: {A} survey,'' {\em IEEE Internet of
  Things Journal}, vol.~8, pp.~8799--8815, Jan. 2021.

\bibitem{sail2021}
M.~Säily, O.~Yilmaz, D.~Michalopoulos, E.~Pérez, R.~Keating, and
  J.~Schaepperle, ``Positioning technology trends and solutions toward {6G},''
  in {\em Proc. Int. Symp. Personal, Indoor and Mobile Radio Commun.},
  pp.~1--7, Sep. 2021.

\bibitem{keat2019}
R.~Keating, M.~Säily, J.~Hulkkonen, and J.~Karjalainen, ``Overview of
  positioning in {5G} {N}ew {R}adio,'' in {\em Proc. Int. Symp. Wireless
  Commun. Systems}, pp.~320--324, Aug. 2019.

\bibitem{ren2021}
B.~Ren, R.~Fang, X.~Ren, G.~Li, H.~Li, Z.~Zhao, J.~Li, R.~Da, and S.~Sun,
  ``Progress of {3GPP} {Rel-17} standards on {N}ew {R}adio ({NR})
  positioning,'' in {\em Proc. Int. Conf. Indoor Positioning and Indoor
  Navigation}, pp.~1--14, Nov. 2021.

\bibitem{xiao15}
Z.~Xiao, H.~Wen, A.~Markham, N.~Trigoni, P.~Blunsom, and J.~Frolik,
  ``Non-line-of-sight identification and mitigation using received signal
  strength,'' {\em IEEE Transactions on Wireless Communications}, vol.~14,
  pp.~1689--1702, Mar. 2015.

\bibitem{choi18}
J.-S. Choi, W.-H. Lee, J.-H. Lee, J.-H. Lee, and S.-C. Kim, ``Deep learning
  based {NLOS} identification with commodity {WLAN} devices,'' {\em IEEE
  Transactions on Vehicular Technology}, vol.~67, pp.~3295--3303, Apr. 2018.

\bibitem{soga2011}
H.~Soganci, S.~Gezici, and H.~V. Poor, ``Accurate positioning in ultra-wideband
  systems,'' {\em IEEE Wireless Commun.}, vol.~18, pp.~19--27, Apr. 2011.

\bibitem{posl2021}
M.~Posluk, J.~Ahlander, D.~Shrestha, S.~{Modarres Razavi}, G.~Lindmark, and
  F.~Gunnarsson, ``{5G} deployment strategies for high positioning accuracy in
  indoor environments,'' in {\em Proc. Int. Conf. Indoor Positioning and Indoor
  Navigation}, pp.~1--15, Nov. 2021.

\bibitem{ryden2015baseline}
H.~Ryd{\'e}n, S.~M. Razavi, F.~Gunnarsson, S.~M. Kim, M.~Wang, Y.~Blankenship,
  A.~Gr{\"o}vlen, and {\AA}.~Busin, ``Baseline performance of {LTE} positioning
  in {3GPP 3D MIMO} indoor user scenarios,'' in {\em 2015 International
  Conference on Localization and GNSS}, pp.~1--6, Jun. 2015.

\bibitem{brooks2005review}
T.~Brooks, H.~Bakker, K.~Mercer, and W.~Page, ``A review of position tracking
  methods,'' in {\em Proc. Int. Conf. on Sensing Technology}, pp.~54--59, Nov.
  2005.

\bibitem{holm2020}
H.~Holma, A.~Toskala, and T.~Nakamura, {\em 5G Technology:\,3GPP {N}ew
  {R}adio}.
\newblock John Wiley \& Sons, Ltd, 2020.

\bibitem{shah2018}
A.~Shahmansoori, G.~E. Garcia, G.~Destino, G.~Seco-Granados, and H.~Wymeersch,
  ``Position and orientation estimation through millimeter-wave {MIMO} in {5G}
  systems,'' {\em IEEE Trans. Wireless Commun.}, vol.~17, pp.~1822--1835, Mar.
  2018.

\bibitem{rapp2013}
T.~S. Rappaport, S.~Sun, R.~Mayzus, H.~Zhao, Y.~Azar, K.~Wang, G.~N. Wong,
  J.~K. Schulz, M.~Samimi, and F.~Gutierrez, ``Millimeter wave mobile
  communications for {5G} cellular: {I}t will work!,'' {\em IEEE Access},
  vol.~1, pp.~335--349, May 2013.

\bibitem{38.901}
{3GPP TSG RAN}, ``Study on channel model for frequencies from 0.5 to 100
  {G}hz,'' TR 38.901, 3GPP, May 2017.
\newblock Version 14.0.0.

\bibitem{de1997computational}
M.~De~Berg, M.~Van~Kreveld, M.~Overmars, and O.~Schwarzkopf, ``Computational
  geometry,'' in {\em Computational geometry}, pp.~1--17, Springer, 1997.

\bibitem{jian2021}
W.~Jiang, B.~Han, M.~Habibi, and H.~Schotten, ``The road towards {6G}: {A}
  comprehensive survey,'' {\em IEEE Open J. Commun. Society}, vol.~2,
  pp.~334--366, Feb. 2021.

\bibitem{abedi2021visible}
M.~Abedi, A.~A. Dowhuszko, and R.~Wichman, ``Visible light communications: A
  novel indoor network planning approach,'' in {\em Proc. IEEE Global Commun.
  Conf.}, pp.~1--7, Dec. 2021.

\bibitem{dowh2017}
A.~Dowhuszko, V.~{Gil Jiménez}, B.~{Genovés Guzmán}, and A.~{Pérez-Neira},
  ``Distance-aware coordinated multi-point transmission for terahertz band
  communication,'' in {\em Proc. IEEE Int. Workshop Signal Process. Adv.
  Wireless Commun.}, pp.~1--5, Jul. 2017.

\bibitem{mold2014}
A.~Moldovan, M.~Ruder, I.~Akyildiz, and W.~Gerstacker, ``{LOS} and {NLOS}
  channel modeling for terahertz wireless communication with scattered rays,''
  in {\em Proc. IEEE Global Commun. Conf. Workshops}, pp.~388--392, Dec. 2014.

\bibitem{arms2012}
C.~M. Armstrong, ``The truth about terahertz,'' {\em IEEE Spectrum}, vol.~49,
  pp.~36--41, Sep 2012.

\bibitem{dowh2022}
A.~Dowhuszko and B.~{Genovés Guzmán}, ``Closed form approximation of the
  actual spectral power emission of commercial color {LEDs} for {VLC},'' {\em
  J. Lightwave Tech.}, vol.~40, pp.~4311--4320, Mar. 2022.

\bibitem{dowh2020}
A.~Dowhuszko, M.~Ilter, and J.~Hämäläinen, ``Visible light communication
  system in presence of indirect lighting and illumination constraints,'' in
  {\em Proc. IEEE Int. Conf. Commun.}, pp.~1--6, Jun. 2020.

\bibitem{armstrong2013visible}
J.~Armstrong, Y.~A. Sekercioglu, and A.~Neild, ``Visible light positioning: A
  roadmap for international standardization,'' {\em IEEE Commun. Mag.},
  vol.~51, pp.~68--73, Dec. 2013.

\bibitem{sisi2018}
E.~Sisinni, A.~Saifullah, S.~Han, U.~Jennehag, and M.~Gidlund, ``Industrial
  internet of things: {C}hallenges, opportunities, and directions,'' {\em IEEE
  Trans. Industrial Informatics}, vol.~14, pp.~4724--4734, Jul. 2018.

\bibitem{bottigliero2021low}
S.~Bottigliero, D.~Milanesio, M.~Saccani, and R.~Maggiora, ``A low-cost indoor
  real-time locating system based on tdoa estimation of uwb pulse sequences,''
  {\em IEEE Transactions on Instrumentation and Measurement}, vol.~70,
  pp.~1--11, Mar. 2021.

\bibitem{espelage2001solve}
W.~Espelage, F.~Gurski, and E.~Wanke, ``How to solve {NP}-hard graph problems
  on clique-width bounded graphs in polynomial time,'' in {\em Graph-Theoretic
  Concepts in Computer Science: 27th InternationalWorkshop, WG 2001
  Boltenhagen, Germany, June 14--16, 2001 Proceedings 27}, pp.~117--128,
  Springer, 2001.

\end{thebibliography}
\bibliographystyle{ieeetr}

\end{document}